\documentclass[12pt]{amsart}
\usepackage{amssymb, latexsym}

\theoremstyle{plain}
\newtheorem{theorem}{Theorem}[section]

\newtheorem{proposition}[theorem]{Proposition}
\newtheorem{lemma}[theorem]{Lemma}

\theoremstyle{definition}
\newtheorem{definition}[theorem]{Definition}
\newtheorem{remark}[theorem]{Remark}
\newtheorem{example}[theorem]{Example}

\numberwithin{equation}{section}

\usepackage{dsfont}
\usepackage{braket}
\usepackage{color}
\usepackage{tikz}
\usepackage{tkz-euclide}
\usepackage{tikz-3dplot}
\usepackage{graphicx}
\usepackage{hyperref}
\usepackage[color,matrix,arrow]{xy}
\input xy
\xyoption{all}

\begin{document}

\title[Convex polygon coordinates]{On the intrinsic geometry of polyhedra: Convex polygon coordinates}

\author[Romanowska]{A.B. Romanowska}
\author[Smith]{J.D.H. Smith}
\author[Zamojska-Dzienio]{A. Zamojska-Dzienio}
\address{(A.R., A.Z.) Faculty of Mathematics and Information Science\\
Warsaw University of Technology\\
00-662 Warsaw, Poland}

\address{(A.R., J.S.) Department of Mathematics\\
Iowa State University\\
Ames, Iowa, 50011, USA}

\email{(A.R.) anna.romanowska@pw.edu.pl\phantom{,}}
\email{(J.S.) jdhsmith@iastate.edu\phantom{,}}
\email{(A.Z.) anna.zamojska@pw.edu.pl}

\urladdr{(J.S.) 
\protect
{
\href{https://jdhsmith.math.iastate.edu/}
{https://jdhsmith.math.iastate.edu/}
}
}
\keywords{convex geometry; convex polytopes; convex polygons; barycentric coordinates; non-crossing chords; Catalan numbers}
\subjclass[2020]{52A99, 51M20, 52A10}

\begin{abstract}
There is a very extensive literature dealing with convex polytopes from the standpoints of combinatorics and numerical analysis. By contrast, the current paper adopts an alternative viewpoint that regards a polytope as an autonomous space in its own right, with its own intrinsic geometry. Our attention is focused on the complete set of all the coordinate systems that serve to locate a point of the polytope; divorced, for example, from the smoothness issues that are of concern for applications in numerical analysis. We use the efficient and appropriate algebraic language of barycentric algebras to elicit the convex structure of the set of polytope coordinate systems.

Specializing to convex polygons, we examine the chordal coordinate systems that are determined by the triangulations of the polygon. An algorithm to compute the coordinates of a point within such a system is presented. The algorithm relies on a coalgebra structure that transports a probability distribution on one side of a triangle to distributions on each of the remaining two sides. The Catalan number enumeration of the polygon triangulations (well-known within combinatorics) is then obtained in a natural geometric fashion from the parsing trees of the coalgebra structure of our algorithm.

\end{abstract}

\thanks{The work of the third author was partially conducted while a Visiting Scholar at Iowa State University. It was supported under a Fulbright Senior Award granted by the Polish-U.S. Fulbright Commission, and additionally by the Kosciuszko Foundation---The American Centre of Polish Culture.}

\maketitle

\tableofcontents

\section{Introduction}

Convex polyhedra appear as important objects in numerous areas of mathematics and its applications, and have been investigated using various techniques ranging from combinatorics to numerical analysis. The approach that is adopted in the current paper is to regard a convex polytope $\Pi$ as a geometric object in its own right, existing independently of the geometry of any ambient space. Our intrinsic approach is facilitated by the use of \emph{barycentric algebras}, which consist of binary operations that are defined for each element of the closed real unit interval $I$.

As a geometric space, a convex polytope $\Pi$ with vertex set $V$ has its own coordinate systems for locating its points as convex combinations of its vertices. The first main result of the paper, Theorem~\ref{T:PolCoSys}, shows that the full set $K_\Pi$ of all such coordinate systems forms a convex subset of the hypercube $I^{\Pi\times V}$. The proof of Theorem~\ref{T:PolCoSys} relies on a calculus for handling arbitrary coordinate systems, based on a version of Dirac's bra-ket notation.

The subsequent parts of the paper specialize to the case where $\Pi$ is a convex polygon. Using Theorem~\ref{T:PolCoSys} and our bra-ket calculus for coordinate systems, the second main result of the paper, Theorem~\ref{T:ChrdCord}, associates a system $\ket{\mathbf v}_\delta$ (for each $\mathbf v\in V$) of \emph{chordal coordinates} to each triangulation $\delta$ of $\Pi$ by a configuration of non-intersecting chords. These configurations appear naturally in the combinatorial setting, where it is known that they are counted by Catalan numbers. We present an algorithm to compute the chordal coordinates $\braket{\mathbf a|\mathbf v}_\delta$ of a point $\mathbf a$ of $\Pi$. The algorithm is based on a coalgebra operation which takes a probability distribution on one side of an oriented triangle, and transports it to a pair of probability distributions, one on each of the two remaining sides.  In our geometric approach, the Catalan enumeration follows naturally by assigning the triangulation $\delta$ to the parsing tree of the sequence of successive coalgebra operations created by the algorithm for $\ket{\mathbf v}_\delta$.

If the vertex set $V$ of the convex polygon $\Pi$ has size $n$, then the dihedral group $D_n$ of degree $n$ (and order $2n$) acts on $V$, inducing an action on the triangulations $\delta$ and their chordal coordinate systems $\ket{\mathbf v}_\delta$. The final main result of the paper, Theorem~\ref{T:CartoCDS}, demonstrates how averaging the chordal coordinate systems over the $D_n$-orbits provides more symmetrical coordinate systems, which we describe as \emph{cartographic}.

\subsection{Plan of the paper}

Barycentric algebras are defined in Section~\ref{S:BarycAlg} as sets equipped with a collection of basic binary operations indexed by the open real unit interval $I^\circ$, satisfying axioms known as \emph{idempotence} \eqref{E:idemptnc}, \emph{skew commutativity} \eqref{E:skewcomm}, and \emph{skew associativity} \eqref{E:skewasoc}. Algebraically, the ordered structures known as semilattices satisfy the axioms of idempotence, commutativity, and associativity. Thus, in a very strong sense, barycentric algebras should be viewed as geometrical expansions of semilattices. At the same time, the behavior of barycentric algebras shares much in common with the behavior of vector spaces in linear algebra.

Section~\ref{S:CoSyPoly} introduces the general framework for the study of barycentric coordinate systems on a given convex polytope $\Pi$, founded on the theory of barycentric algebras and the bra-ket notation \eqref{E:brac-ket} inspired by Dirac's notation for function application in quantum mechanics. Definition~\ref{D:PolCoSys} presents the formal specification of a coordinate system. The indicator or characteristic function of a subset $X$ of $\Pi$ is written as $\mathds 1_X$, while the identity function on $\Pi$ is written as $1_\Pi$. The bra-ket notation helps to clarify the distinction between the ``partition of unity'' property \eqref{E:BraketP1} as a partition of $\mathds 1_\Pi$ and the ``linear precision'' property \eqref{E:LinPrcsn} as a partition of $1_\Pi$.
Use of barycentric algebra shows that, in our context, the partition of unity property is actually a consequence of the linear precision property that does not need to be specified separately (Remark~\ref{R:PolCoSys}). Theorem~\ref{T:PolCoSys} shows that the set $K_\Pi$ of coordinate systems on a convex polytope $\Pi$ itself carries the structure of a convex set.
In the combinatorial setting, this convex set turns out to be the \emph{secondary polytope} (Remark~\ref{R:2ndaryPT}), see e.g. \cite[p.7, \S5.1]{DLRS} or \cite[Ch.~7]{GelKapZel}.

The conceptual framework developed in Section~\ref{S:CoSyPoly} plays a fundamental role in Sections~\ref{S:ChDeCaCo}--\ref{SS:CartoCor}, where our two basic coordinate systems for convex polygons are introduced: \emph{chordal coordinates} and \emph{cartographic coordinates}. The chordal coordinates are designed to be sparse, maximizing the number of weights that take the value zero. On the other hand, the cartographic coordinates are designed to be symmetrical, treating all parts of the convex polygon equally.

A given system of chordal coordinates for a convex $n$-gon $\Pi$ with vertex set $V$ is based on a decomposition of $\Pi$ into triangles by a set of non-crossing chords that connect pairs of vertices which are not adjacent on the boundary of $\Pi$. Figure~\ref{F:HexDcomp} illustrates the chordal decompositions of a hexagon. Each chordal decomposition is assigned a \emph{chordal degree sequence} (CDS): a partition of $n$ describing the degrees (or valencies) of the $n$-gon vertices in the graph formed by the chords. The dihedral group $D_n$ of degree $n$ and order $2n$ acts naturally on the set of vertices, thereby inducing permutations of the set of all chordal decompositions that share a given CDS.

The triangles appearing in a given chordal decomposition are known as its \emph{regions}, and the corresponding chordal coordinates of a point $\mathbf x$ of $\Pi$ are its uniquely defined barycentric coordinates with respect to the region(s) in which it lies. For a chordal decomposition, \S\ref{SSRegnIdOr} presents an algorithm to identify the regions, and to locate the region in which a given point $\mathbf x$ actually lies. The backbone of the algorithm is the rooted parsing tree for a non-coassociative coproduct (compare \cite[Def'n.~2.3(d)]{Sm171}) determined by each given chordal decomposition and initial edge (\S\ref{SSS:ChDePaTr}). An example from the hexagon is displayed in \S\ref{SSS:hexample}. Theorem~\ref{T:ChrdCord} verifies that the chordal coordinates furnish a coordinate system in the sense of Definition~\ref{D:PolCoSys}.

The cartographic coordinates are described in Section \ref{SS:CartoCor}. Briefly, for a given CDS, consider a chordal decomposition $\delta$ with that CDS. On the basis of Theorem~\ref{T:PolCoSys}, the cartographic coordinate system for the CDS is defined as the barycenter of the set of all chordal coordinate systems for chordal decompositions $\delta g$ in the orbit of $\delta$ under the action of group elements $g$ from the dihedral group $D_n$ (Definition~\ref{D:CartoCDS}).

\subsection{Notational conventions}

The essence of algebra lies in the repeated concatenation of successive functions, where the output of one function is fed as an input to the next. In this context, Euler's analytical notation as in the expression $\sin x$, where the argument is read after the function, clashes with the natural direction of reading from left to right.\footnote{Even within texts written in Semitic languages, mathematics is read in this direction.}
Hence, our default option for function notation is \emph{algebraic} or \emph{diagrammatic}, with functions following their arguments (e.g., $n!$ for the factorial function), or placed as a suffix (e.g., $x^2$ for the squaring function).

For other notational conventions and definitions that are not otherwise specified explicitly (e.g., for group actions), readers are invited to consult \cite{PMA}.

\section{Barycentric algebras}\label{S:BarycAlg}

\subsection{The basic definition}\label{SS:barycent}

Staring from the use of barycentric coordinates in geometry by M\"obius \cite{Moebius}, barycentric algebras were introduced in the nineteen-forties \cite{Kneser, Stone} for the axiomatization of real convex sets, presented algebraically with binary operations given by weighted means, the weights taken from the (open) unit interval in the real numbers.

Denote by $I$ the closed real unit interval $[0,1]$, and by $I^{\circ}$ the open real unit interval $I\smallsetminus\{0,1\}=]0,1[$.
For $p,q\in I^{\circ}$ define:
\begin{enumerate}
\item[$(\mathrm a)$] \emph{complementation}: $p'=1-p$;
\item[$(\mathrm b)$] \emph{dual product}: $p\circ q = p+q-p\cdot q=(p'q')'$.
\end{enumerate}
Note that the values $p'$ and $p\circ q$ are again in $I^{\circ}$.

\begin{definition}\cite{RS85,RS90,Modes}
A \emph{barycentric algebra} $A$ or $(A,I^\circ)$ is defined as a set $A$ that is equipped with binary \emph{operations}
\begin{equation}\label{E:OpOfBaAl}
\underline{p}\colon A\times A\to A;(x,y)\mapsto xy\,\underline{p}
\end{equation}
for each element (or \emph{operator}) $p\in I^\circ$. The operations \eqref{E:OpOfBaAl} are required to satisfy the properties of \emph{idempotence}
\begin{equation}\label{E:idemptnc}
xx\,\underline{p}=x
\end{equation}
for $x$ in $A$, \emph{skew-commutativity}
\begin{equation}\label{E:skewcomm}
xy\,\underline{p}=yx\,\underline{p'}
\end{equation}
for $x$, $y$ in $A$, and \emph{skew-associativity}
\begin{equation}\label{E:skewasoc}
xy\,\underline{p}\,z\,\underline{q}
=x\,yz\,\big(\,\underline{q/p\circ q}\,\big)\,\underline{p\circ q}
\end{equation}
for $x$, $y$, $z$ in $A$.
\end{definition}

\begin{example}\label{X:SemiLatt}
A \emph{semilattice} is a semigroup $(S,\cdot)$ which is commutative and idempotent. When each barycentric operation is interpreted as $xy\,\underline p=x\cdot y$ on a semilattice $S$, the semilattice becomes a barycentric algebra. In this case, skew-commutativity and skew-associativity reduce respectively to genuine commutativity and associativity.
\end{example}

\begin{example}\label{X:CoAfBAlg}
When $x$ and $y$ are elements of a real vector space $V$, and $p$ is an element of $I^\circ$, one can define
\begin{equation}\label{E:punderln}
xy\,\underline{p}=x(1-p)+yp=xp'+yp \, ,
\end{equation}
so that $\underline{p}$ is interpreted as a weighted mean operation combining the inputs $x$ and $y$. This way, one obtains the barycentric algebra $(V,I^\circ)$. Similarly, each convex set of the space $V$ forms a barycentric algebra.
\end{example}

\subsection{Subalgebras, homomorphisms, and products}\label{Ss:subalg}
\begin{definition}\label{D:SbWaSink}
Suppose that $(A,I^\circ)$ is a barycentric algebra, and $B$ is a subset of $A$.
If
$$
\forall\ p\in I^\circ\,,\
x\in B
\mbox{ and }
y\in B
\
\Rightarrow
\
xy\,\underline p\in B\,,
$$
then the subset $B$ is a \emph{subalgebra} of $(A,I^\circ)$.
\end{definition}

\begin{remark}\label{R:SbWaSink}
In Definition~\ref{D:SbWaSink}, the subalgebra $B$ is a barycentric algebra $(B,I^\circ)$ in its own right. Note also that the intersection of any family of subalgebras of a barycentric algebra $(A,I^\circ)$ is again a subalgebra of $(A,I^\circ)$.
\end{remark}

\begin{definition}
Let $(A,I^\circ)$ be a barycentric algebra, with a subset $S$. Then the \emph{subalgebra} $\braket S$ \emph{generated} by $S$ is the intersection of all the subalgebras of $(A,I^\circ)$ that contain $S$.
\end{definition}

\begin{example}
Let $S$ be a set of points in a real affine space $A$. Interpret $A$ as a barycentric algebra $(A,I^\circ)$ according to Example~\ref{X:CoAfBAlg}. Then the \emph{convex hull} of $S$ is the subalgebra $\braket S$ of $(A,I^\circ)$ that is generated by $S$.
\end{example}

\begin{definition}\label{D:BaHomSet}
Suppose that $(A,I^\circ)$ and $(A',I^\circ)$ are barycentric algebras.
\begin{enumerate}
\item[$(\mathrm a)$]
A function $f\colon A\to A';x\mapsto x^f$ is said to be a \emph{barycentric} (\emph{algebra}) \emph{homomorphism} if
\begin{equation}\label{E:BaryAHom}
xy\,\underline p^f=x^fy^f\,\underline p
\end{equation}
for all $x,y\in A$ and $p\in I^\circ$.
\item[$(\mathrm b)$]
A homomorphism is an \emph{isomorphism} if it is bijective.
\item[$(\mathrm c)$]
Write $\mathbf B(A,A')$ for the set of all barycentric homomorphisms from $(A,I^\circ)$ to $(A',I^\circ)$.
\end{enumerate}
\end{definition}

Informally stated, a barycentric homomorphism preserves the operations of the barycentric algebras. It preserves not only the basic binary operations $\underline{p}$, but also all the \emph{derived} operations that are obtained as compositions of the basic ones. For the barycentric algebras defined in Example \ref{X:CoAfBAlg}, these derived operations are just the \emph{convex combinations}.

\begin{remark}
Two composable barycentric homomorphisms compose to a barycentric homomorphism. This means that the variety $\mathbf B_0$ of barycentric algebras forms (the object class of) a category $\mathbf B$ of barycentric algebras, with the set $\mathbf B(A,A')$ as the set of morphisms from $A$ to $A'$.
\end{remark}

Now, we provide two crucial constructions for our further investigations. First, suppose that $X$ is a set and $(A',I^\circ)$ is a barycentric algebra. Consider the set $\mathbf{Set}(X,A')$ of all functions from $X$ to $A'$. For $p\in I^\circ$, define the \emph{pointwise} or \emph{componentwise} operation
\begin{equation}\label{E:pontwise}
fg\,\underline p\colon X\to A';x\mapsto x^fx^g\,\underline p
\end{equation}
on elements $f,g$ of $\mathbf{Set}(X,A')$. This way, we introduce the barycentric algebra structure on the set $\mathbf{Set}(X,A')$.

Further, take barycentric algebras $(A,I^\circ)$ and $(A',I^\circ)$. Again, $\mathbf B(A,A')$ becomes a barycentric algebra under the pointwise operations \eqref{E:pontwise}. It is a subalgebra of the barycentric algebra $\mathbf{Set}(A,A')$ \cite[\S 2.4.2]{RJSAZD} --- compare \cite[Prop.~5.1]{Modes}.

\begin{definition}
Suppose that $(A,I^\circ)$ and $(A',I^\circ)$ are barycentric algebras. Then their \emph{(direct) product} is the (barycentric) algebra defined on the set  $\set{(x,x')|x\in A\,,\ x'\in A'}$ with componentwise structure
$
(x,x')(y,y')\underline p=\left(xy\,\underline p,x'y'\,\underline p\right)
$
for all $p\in I^\circ$.
\end{definition}

Iterated products and powers are defined in the obvious way, since the construction of $(A,I^\circ)\times (A',I^\circ)$ extends naturally to a direct product of any number of factors.

The class $\mathbf{B}$ of barycentric algebras, being defined by identities, namely  \eqref{E:idemptnc}--\eqref{E:skewasoc}, forms a \emph{variety} of algebras --- a class closed under the taking of homomorphic images, subalgebras and direct products \cite[Th.~IV.2.3.3]{PMA}. This variety is generated by its \emph{cancellative} members, barycentric algebras which satisfy the additional property of \emph{cancellativity}:
$$
\forall\ p\in I^\circ\,,\
\forall\ x,y,z\in A\,,\
xy\,\underline p=xz\,\underline p
\
\Rightarrow
\
y=z.
$$
Cancellative barycentric algebras are \emph{convex sets} (\cite{N70}, \cite[Th.~269]{RS85}, \cite[Th.~5.8.6]{Modes}).

\subsection{Free algebras and simplices}\label{SS:barcoord}

Let $X$ be a set. Since the class $\mathbf B$ of barycentric algebras is a variety, there exists in $\mathbf B$ a free algebra $XB$ over the set $X$ \cite[p.308]{PMA}. This means that any mapping $f\colon X\rightarrow A$ from the set $X$ to the underlying set of any barycentric algebra $(A,I^\circ)$ has a unique extension to a barycentric homomorphism $\overline{f}\colon (XB,I^\circ)\rightarrow (A,I^\circ)$. Hence, every barycentric algebra is a homomorphic image of a free algebra. The free barycentric algebra over the empty set is empty.

\subsubsection{Construction of the free barycentric algebra}\label{SSS:FreeBAlg}

Given a set $X$, and the real line $(\mathbb R,I^\circ)$ as a barycentric algebra according to Example~\ref{X:CoAfBAlg}, take the barycentric algebra $\mathbf{Set}(X,\mathbb R)$. Consider the function
\begin{equation}\label{E:delta2xR}
X\to\mathbf{Set}(X,\mathbb R);
x\mapsto
\big[
\,
\delta_x\colon y\mapsto
\mbox{ \textbf{if} } y=x \mbox{ \textbf{then} } 1 \mbox{ \textbf{else} } 0
\,
\big]
\,.
\end{equation}
Define $XB$ as the subalgebra $\braket{\set{\delta_x|x\in X}}$ of $\mathbf{Set}(X,\mathbb R)$ generated by the set of delta functions.

By the Well-Ordering Theorem, the set $X$ may be given a total order $(X,\le)$. Repeated application of the axioms \eqref{E:idemptnc}--\eqref{E:skewasoc} for a barycentric algebra then shows that each element of $XB$ has a unique expression of the form
\begin{equation}\label{E:deltasqs}
\delta_{x_0}\delta_{x_1}\dots\delta_{x_r}\,\underline q_1\dots\underline q_r
\end{equation}
for some $r\in\mathbb N$, ordered subset $\set{x_0<x_1\dots<x_r}$ of $X$, and operators $q_1,\dots, q_r\in I^\circ$ (compare \cite[Lemma~5.8.1]{Modes}). The image of \eqref{E:deltasqs} under the extension $\overline f$ is then taken as the barycentric combination $x_0^fx_1^f\dots x_r^f\,\underline q_1\dots\underline q_r$ in $(A,I^\circ)$.

\subsubsection{Probability distributions and weighted averages}\label{SSS:PrDiWeAv}

The element \eqref{E:deltasqs} may be written as the convex combination
\begin{equation}\label{E:deltasps}
\sum_{k=0}^r\delta_{x_k}p_k
\end{equation}
with coefficients $p_k=q_kq_{k+1}'\dots q_r'\in I^\circ$ for $0\le k\le r$ and $q_0=1$. Thus, barycentric combinations may be taken as finitely supported probability distributions or weighted averages. In \eqref{E:deltasps}, weight $p_k$ is attached to the representation $\delta_{x_k}$ of $x_k$. The weights are barycentric coordinates in the sense of \cite[\S31]{Moebius}.

\subsubsection{Kernel functions}\label{SSS:KernlFun}
Let $A$ be a barycentric algebra generated by a finite set $V\subseteq A$. This means that each element $a$ of $A$ may be written as a barycentric combination
\begin{equation}\label{E:brycmbkn}
a=\sum_{v\in V}p(a,v)v
\end{equation}
with some specific choice of $p(a,v)\in I$, as observed in \S\ref{SSS:PrDiWeAv}.\footnote{The full real unit interval $I$ is required here, since some of the weights may be zero, and one of the weights may actually be $1$.}. Suppose that such choice has been done. One may then focus on the function
\begin{equation}\label{E:kernelfn}
p\colon A\times V\to I;(a,v)\mapsto p(a,v)
\end{equation}
which, in the language of analysis, may be called a \emph{kernel} (\emph{function}) due to its role in \eqref{E:brycmbkn}, or in continuous versions of \eqref{E:brycmbkn} such as \cite[\S3.1]{KosBarW}.

\subsubsection{Partitions of unity and linear precision}\label{SSS:P1LinPre}

The parametrized univariate (or \emph{curried} \cite[\S O.3.4]{PMA}) versions 
\begin{equation}\label{E:CoordFun}
A\to I;a\mapsto p(a,v)
\quad
\mbox{ for each }
\
v\in V
\end{equation}
of \eqref{E:kernelfn} are known as \emph{coordinate functions} (or ``barycentric coordinates'' \cite[\S1.1]{WarSchHirDes}). The equation
\begin{equation}\label{E:PartoOne}
\sum_{v\in V}p(a,v)=1
\end{equation}
is then expressed as saying that the coordinate functions form a \emph{partition of unity}, while \eqref{E:brycmbkn} is said to express the \emph{barycentric property} \cite[(3)]{KosBarW} or \emph{linear precision} \cite[\S1.1]{WarSchHirDes}. We record the following for subsequent reference.

\begin{lemma}\label{L:BPimpPO1}
If the barycentric property \eqref{E:brycmbkn} holds, then the partition of unity property \eqref{E:PartoOne} follows.
\end{lemma}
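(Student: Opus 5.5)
The plan is to read \eqref{E:brycmbkn} as a barycentric combination: a finitely supported probability distribution on $V$, in the sense of \S\ref{SSS:PrDiWeAv}. Such a combination is, by construction, the image under the extension $\overline f$ of an element of the free algebra $VB$. Take $f\colon V\to A$ to be the inclusion $v\mapsto v$. Then \eqref{E:brycmbkn} says exactly that $a$ is the image $\overline f$ of the element
\[
\sum_{v\in V}\delta_v\,p(a,v)
\]
of $VB$. Two features of this element matter: $VB$ is the subalgebra of $\mathbf{Set}(V,\mathbb R)$ generated by the delta functions, and its elements have the normal form \eqref{E:deltasqs}, equivalently \eqref{E:deltasps}.

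\textbf{Step 1: the expression lies in $VB$.} First check that the expression in \eqref{E:brycmbkn} is meaningful only when it denotes such an element. The key input is that the weights $p_k$ in \eqref{E:deltasps} come from operators $q_1,\dots,q_r$. They are $p_k=q_kq_{k+1}'\cdots q_r'$ with $q_0=1$.

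\textbf{Step 2: the weights sum to $1$.} Prove by induction on $r$ that $\sum_{k=0}^r p_k=1$. For $r=0$ we have $p_0=q_0=1$. For the inductive step, split the sum as
\[
\sum_{k=0}^{r} p_k=\Big(\sum_{k=0}^{r-1}q_kq_{k+1}'\cdots q_{r-1}'\Big)q_r'+q_r .
\]
By the inductive hypothesis the bracket equals $1$, so the total is $q_r'+q_r=1$.

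\textbf{Step 3: zero weights.} Allow some $p(a,v)=0$ for $v$ outside the support, and the degenerate case where a single weight equals $1$. Both are handled by idempotence \eqref{E:idemptnc} and by noting that zero weights add nothing to the sum. Summing over all of $V$ then gives \eqref{E:PartoOne}.

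An alternative check avoids the normal form. Evaluate the function $\sum_v\delta_v p(a,v)\in VB\subseteq\mathbf{Set}(V,\mathbb R)$ under the map $\sigma\colon\mathbf{Set}(V,\mathbb R)\to\mathbb R$, $g\mapsto\sum_{v}g(v)$. This map is well-defined on finitely supported functions and is a barycentric homomorphism, since the operations are pointwise weighted means. Because $\sigma$ sends each $\delta_v$ to $1$, idempotence in $\mathbb R$ shows that $\sigma$ maps the whole generated subalgebra $VB$ to $\{1\}$.

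\textbf{Main obstacle.} The hard part is interpretive rather than computational. \eqref{E:brycmbkn} must be read as an honest barycentric combination, a derived operation applied to the generators, and not as an arbitrary linear relation in some ambient vector space. Once the coefficients are recognized as those of an element of $VB$, the homomorphism $\sigma$ (or the telescoping induction above) finishes the proof.
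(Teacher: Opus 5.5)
Your proof is correct, but your main route differs from the paper's. The paper argues in one line: apply the constant homomorphism $k\colon A\to\{1\}$ to \eqref{E:brycmbkn}. The left side becomes $1$, and the right side, evaluated in $\{1\}\subseteq\mathbb R$, becomes $\sum_{v}p(a,v)$.

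Your Steps 1--3 instead pull the combination back to its normal form in the free algebra $VB$. You then check $\sum_k p_k=1$ by the telescoping induction $\sum_{k\le r}p_k=q_r'\cdot 1+q_r$, a purely arithmetic verification that uses no homomorphism. Your alternative check is essentially the paper's argument relocated to $VB$. The composite $VB\to A\to\{1\}\subseteq\mathbb R$ is the homomorphism extending $v\mapsto 1$. It agrees with your $\sigma\colon g\mapsto\sum_v g(v)$ on the generators $\delta_v$, hence on all of $VB$.

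Your version makes explicit a step the paper leaves tacit: why the image of the right-hand side under $k$ is literally the numerical sum of the weights. It also shows why this needs \eqref{E:brycmbkn} to be a genuine derived barycentric operation rather than a linear relation. A linear relation's coefficients need not sum to $1$ when the origin is a vertex. For example, in Example~\ref{X:ChordFun}, where $\mathbf v_1=(0,0)$, one has $\mathbf a=\tfrac38\mathbf v_3$ as vectors. The paper's version is shorter and works directly in $A$, without the normal form.
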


\begin{proof}
Suppose that \eqref{E:brycmbkn} holds for an element $a$ of $A$. Then, we may consider the constant barycentric homomorphism $k:A\to\set 1$. Thus \eqref{E:PartoOne} is obtained as the image of \eqref{E:brycmbkn} under $k$.
\end{proof}

\subsubsection{Interpolation}

A function $f\colon V\to C$ from $V$ to some codomain barycentric algebra $C$ may be \emph{extended} (in the algebraist's terminology) or \emph{interpolated} (in the analyst's terminology) to a function $\widehat f\colon A\to C$ with
$$
\widehat f(a)=\sum_{v\in V}p(a,v)f(v)
$$
making use of \eqref{E:brycmbkn} \cite[(2)]{WarSchHirDes}. The extension $\widehat f$ will be uniquely specified as the barycentric homomorphism $\overline f$ when  $A$ is freely generated by $V$.

\section{Coordinate systems for polytopes}\label{S:CoSyPoly}

In this chapter, we introduce our formalism for dealing with coordinate systems on polytopes. Beyond its general interest, the formalism will be needed for an efficient treatment of the chordal coordinates presented in Chapter~\ref{S:ChDeCaCo}.

\subsection{Function spaces}

Let $B$ be a barycentric algebra, e.g., a polytope. Let $\mathbb R^B$ denote the space $\mathbf{Set}(B,\mathbb R)$ of all real-valued functions on $B$. The space inherits pointwise algebra structure from any algebraic structures carried by $\mathbb R$, including commutative rings, real vector spaces, and barycentric algebras (in the latter case recalling the construction of the barycentric algebra $\mathbf{Set}(B,\mathbb R)$ in Subsection \ref{Ss:subalg}). We may also consider the barycentric subalgebra $I^B=\mathbf{Set}(B,I)$ consisting of those functions on $B$ that take values in the closed unit interval $I=[0,1]$. Typical elements of $I^B$ include the \emph{indicator functions}
\begin{equation}\label{E:IndctrFn}
\mathds 1_X\colon B\to I;
a\mapsto
\begin{cases}
1 &\mbox{if } a\in X\,;\\
0 &\mbox{otherwise}
\end{cases}
\end{equation}
of subsets $X$ of $B$.

\begin{lemma}\label{L:IdInRtoB}
$(\mathrm a)$
An element $e$ of the commutative ring $\mathbb R^B$ is \emph{idempotent} $(\mbox{i.e., }e^2=e)$ iff it is the indicator function $\mathds 1_X$ of a subset $X$ of $B$.
\vskip 2mm
\noindent
$(\mathrm b)$
For subsets $X,Y$ of $B$, we have
$
\mathds 1_X\cdot\mathds 1_Y=\mathds 1_{X\cap Y}
$
and
$
\mathds 1_X\circ\mathds 1_Y=\mathds 1_{X\cup Y}
$,
using the pointwise operations on $I^B$ inherited from $(I,\cdot,\circ)$.
\end{lemma}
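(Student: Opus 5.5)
Both parts reduce to pointwise statements about real numbers, because the ring and $(I,\cdot,\circ)$ operations on $\mathbb R^B$ and $I^B$ are defined pointwise. The plan is therefore to evaluate everything at an arbitrary point $a\in B$ and then use elementary arithmetic in $\mathbb R$.

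For part $(\mathrm a)$, first suppose $e\in\mathbb R^B$ satisfies $e^2=e$. Evaluating at $a\in B$ gives $e(a)^2=e(a)$ in $\mathbb R$, that is, $e(a)\bigl(e(a)-1\bigr)=0$. Since $\mathbb R$ is an integral domain, $e(a)\in\{0,1\}$ for every $a$. Setting $X=\set{a\in B|e(a)=1}$, we get $e=\mathds 1_X$ by comparing values with \eqref{E:IndctrFn}. Conversely, for any subset $X$ of $B$, the function $\mathds 1_X$ takes values in $\{0,1\}$. Both of these values are idempotent in $\mathbb R$, so $\mathds 1_X^2=\mathds 1_X$ pointwise.

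For part $(\mathrm b)$, evaluate $\mathds 1_X\cdot\mathds 1_Y$ at $a$. The product $\mathds 1_X(a)\,\mathds 1_Y(a)$ equals $1$ exactly when both factors are $1$, that is, when $a\in X\cap Y$, and equals $0$ otherwise. This is precisely $\mathds 1_{X\cap Y}(a)$. For the dual product, use the identity $p\circ q=(p'q')'$ recorded in Section~\ref{S:BarycAlg}. It extends verbatim to all $p,q\in I$, since the defining formula $p+q-pq$ makes sense there. Complementation acts pointwise on indicator functions by $\mathds 1_X'=1-\mathds 1_X=\mathds 1_{B\smallsetminus X}$. Combining this with the intersection result just proved gives
$$
\mathds 1_X\circ\mathds 1_Y=\bigl(\mathds 1_{B\smallsetminus X}\cdot\mathds 1_{B\smallsetminus Y}\bigr)'=\mathds 1_{(B\smallsetminus X)\cap(B\smallsetminus Y)}'=\mathds 1_{B\smallsetminus\left((B\smallsetminus X)\cap(B\smallsetminus Y)\right)}=\mathds 1_{X\cup Y},
$$
where the last step is De Morgan's law. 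Alternatively, one can check the four cases for the pair $\bigl(\mathds 1_X(a),\mathds 1_Y(a)\bigr)$ directly in the formula $p+q-pq$.

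There is no substantial obstacle here. The only point needing care is that $\circ$ was defined in Section~\ref{S:BarycAlg} only on $I^\circ$. It must be read on the closed interval $I$ via the same formula $p+q-pq$, and this remains valid at the values $0$ and $1$ taken by indicator functions.
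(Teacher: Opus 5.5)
Your proof is correct and follows the paper's pointwise approach. Part (a) is the same argument that each value of $e$ is a root of $x(x-1)$, and the intersection identity is the same evaluation at a point. The only variation is that you derive $\mathds 1_X\circ\mathds 1_Y=\mathds 1_{X\cup Y}$ from the intersection case using $p\circ q=(p'q')'$ and De Morgan, whereas the paper checks the four cases of $p+q-pq$ in a table, which is your stated alternative; your remark that $\circ$ must be read on all of $I$ makes explicit a point the paper leaves implicit.
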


\renewcommand{\arraystretch}{1.3}
\begin{table}[hbt]
\begin{tabular}{c||c|c|c|c}
&$\mathds 1_X$ &$\mathds 1_Y$ &$\mathds 1_{X\cap Y}$ &$\mathds 1_X\circ\mathds 1_Y$\\
\hline
\hline
$a\notin X\cup Y$ &$0$ &$0$ &$0$ &$0$\\
\hline
$a\in X\smallsetminus Y$ &$1$ &$0$ &$0$ &$1$\\
\hline
$a\in Y\smallsetminus X$ &$0$ &$1$ &$0$ &$1$\\
\hline
$a\in X\cap Y$ &$1$ &$1$ &$1$ &$1$\\
\hline
\hline
\end{tabular}
\vskip 2mm
\caption{Indicator function values used to verify the second statement of Lemma~\ref{L:IdInRtoB}(b).}
\label{T:aXYoneXY}
\end{table}

\begin{proof}
(a)
Note that an indicator function \eqref{E:IndctrFn} is an idempotent under pointwise multiplication. Conversely, suppose that $e$ is an idempotent of $\mathbb R^B$, with support $X=e^{-1}\left(\mathbb R\smallsetminus\set0\right)$. Since the value of $e$ at any point $a$ of $B$ is a root of the polynomial $x^2-x=x(x-1)$ over the field $\mathbb R$, it follows that $e=\mathds 1_X$.
\vskip 2mm
\noindent
(b)
First, note
$$
a\in X\cap Y
\
\Leftrightarrow
\
\left[\,a\mathds 1_X=1\mbox{ and }a\mathds 1_Y=1\,\right]
\
\Leftrightarrow
\
a(\mathds 1_X\cdot\mathds 1_Y)=1
$$
for $a\in B$. Recalling $\mathds 1_X\circ\mathds 1_Y=\mathds 1_X+\mathds 1_Y-\mathds 1_X\cdot\mathds 1_Y$, Table~\ref{T:aXYoneXY} may then be used to verify the second equation.
\end{proof}

\renewcommand{\arraystretch}{1}

\subsection{Barycentric coordinates in the function space}\label{SS:BaCoFuSp}

Suppose that $\Pi$ is a barycentric algebra generated by a finite set $V=\set{v_1,\dots,v_n}$. Then the basic barycentric coordinate concepts, as introduced in Section~\ref{SS:barcoord}, may be given natural formulations in the function space $\mathbb R^\Pi$. Here, the notation
\begin{equation}\label{E:brac-ket}
\Pi\times\mathbb R^\Pi\to\mathbb R\colon(a,f)\mapsto\braket{a|f}
\end{equation}
will be used to denote the evaluation of a function $f\in\mathbb R^\Pi$ at an element $a\in\Pi$. Inspired by P.A.M.~Dirac's ``bra-ket'' formalism, it will often be convenient to write $\ket f\in\mathbb R^\Pi$, and occasionally $\bra a\in\Pi$.

Assume, as in \S\ref{SS:barcoord}, that a system $p$ of barycentric coordinates is being presented, or determined. Thus each element $a$ of $\Pi$ may, or is to, be written as a barycentric combination
$a=\sum_{v\in V}p(a,v)v$
with some specific choice of $p(a,v)\in I$ \eqref{E:brycmbkn}. Under the current notational conventions, the kernel function may be written as $\braket{\phantom n|\phantom n}_p$, or just as $\braket{\phantom n|\phantom n}$ if the intended barycentric coordinate system $p$ is clear from the context.

With these conventions, the coordinate function $\Pi\to I$ of \eqref{E:CoordFun} appears as $\ket v_p$, or just $\ket v$, for each generator $v\in V$. Then
\begin{equation}\label{E:BraketP1}
\sum_{v\in V}\ket v=\mathds 1_\Pi
\end{equation}
expresses the partition of unity \eqref{E:PartoOne}, while
\begin{equation}\label{E:LinPrcsn}
\bra a=\sum_{v\in V}\braket{a|v}\bra v
\end{equation}
expresses the linear precision or barycentric property \eqref{E:brycmbkn}. In contrast with the partition of unity expression \eqref{E:BraketP1}, the expression
\begin{equation}\label{E:BraktPid}
\sum_{v\in V}\ket v\bra v=1_\Pi
\end{equation}
may be used to rewrite the barycentric property as a partition of the identity function $1_\Pi$ on the barycentric algebra $\Pi$.

\begin{remark}
The individual symbolism $\ket v\bra v$ appearing as a component of the sum on the left hand side of \eqref{E:BraktPid} does not have any independent meaning in the current context of barycentric algebras. However, if the barycentric algebra is a polytope $\Pi$ lying in a vector space $W$, with a chosen origin, then we could consider
$$
\ket v\bra v\colon\Pi\to W;a\mapsto p(a,v)v
$$
or $\bra a\mapsto\braket{a|v}\bra v$
as the function taking a point $a$ or $\bra a$ of $\Pi$ to the scalar multiple $p(a,v)v$ or $\braket{a|v}\bra v$ of the vector $v$ or $\bra v$ in $W$.
\end{remark}

In summary, the bra-ket notation provides a clear distinction between the two roles that may be played by a vertex $\mathbf v$, either appearing as a particular element $\bra{\mathbf v}$ of the polytope $\Pi$, or as supporting a coordinate function $\ket{\mathbf v}$ (or $\ket{\mathbf v}_p$) within the barycentric coordinate system $p$ on $\Pi$. Furthermore, the notation clearly separates the partition of $\mathds 1_\Pi$, as in \eqref{E:BraketP1}, from the partition of $1_\Pi$, as in \eqref{E:BraktPid}.

\subsection{Polytope coordinate systems}\label{SS:PolCoSys}

\begin{definition}\label{D:PolCoSys}
Let $\Pi$ be a polytope, with vertex set $V=\set{\mathbf v_1,\dots,\mathbf v_n}$. A \emph{coordinate system} for $\Pi$ is a map
\begin{equation}\label{E:CoSyKapa}
\lambda\colon V\to I^\Pi;\mathbf v\mapsto\ket{\mathbf v}_\lambda
\end{equation}
such that \eqref{E:BraktPid} holds.
\end{definition}

\begin{remark}\label{R:PolCoSys}
(a)
By Lemma~\ref{L:BPimpPO1}, the partition of unity property \eqref{E:BraketP1} holds in a coordinate system as specified by Definition~\ref{D:PolCoSys}.
\vskip 2mm
\noindent
(b)
The map $\lambda\colon V\to I^\Pi;\mathbf v\mapsto\ket{\mathbf v}_\lambda$ of \eqref{E:CoSyKapa} may be considered as an element of $(I^\Pi)^V$. Then, it may be curried to the parametrized family or map $\Pi\times V\to I;(\mathbf a,\mathbf v)\mapsto\braket{\mathbf a|\mathbf v}_\lambda$, an element of $I^{\Pi\times V}$. In the context of Theorem~\ref{T:PolCoSys} below, it will be convenient to use currying to identify the spaces $(I^\Pi)^V$ and $I^{\Pi\times V}$.
\end{remark}

\begin{theorem}\label{T:PolCoSys}
The set $K_\Pi$ of cooordinate systems on a polytope $\Pi$ with vertex set $V$ forms a convex subset of the hypercube $I^{\Pi\times V}$ under pointwise barycentric operations.
\end{theorem}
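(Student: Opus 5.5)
We have to show that $K_\Pi$ is a subalgebra of the barycentric algebra $I^{\Pi\times V}$ under the pointwise operations \eqref{E:pontwise}, where $I^{\Pi\times V}$ is identified with $(I^\Pi)^V$ by currying as in Remark~\ref{R:PolCoSys}(b). Here ``convex subset'' means exactly that $K_\Pi$ is closed under every operation $\underline p$ with $p\in I^\circ$. Since $I^{\Pi\times V}$ is a power of the cancellative barycentric algebra $I$, it is itself a convex set. Any subalgebra is then a convex subset, so closure under the operations is all that needs checking.

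Fix $\lambda,\mu\in K_\Pi$ and $p\in I^\circ$, and put $\nu=\lambda\mu\,\underline p$. By the definition of the pointwise operations,
$$\braket{\mathbf a|\mathbf v}_\nu=\braket{\mathbf a|\mathbf v}_\lambda\,p'+\braket{\mathbf a|\mathbf v}_\mu\,p$$
for each $\mathbf a\in\Pi$ and $\mathbf v\in V$. This value lies in $I$ because $I$ is a subalgebra of $\mathbb R$, so $\nu$ is again a map $V\to I^\Pi$. It remains to verify the linear precision property \eqref{E:LinPrcsn}, or equivalently \eqref{E:BraktPid}, for $\nu$. That is, we need
$$\bra{\mathbf a}=\sum_{\mathbf v\in V}\braket{\mathbf a|\mathbf v}_\nu\bra{\mathbf v}$$
for every $\mathbf a\in\Pi$.

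The key step is the following computation in the convex set $\Pi$, which we realize inside an ambient real vector space so that the sums are honest affine combinations. Expanding the weights gives
$$\sum_{\mathbf v}\bigl(\braket{\mathbf a|\mathbf v}_\lambda p'+\braket{\mathbf a|\mathbf v}_\mu p\bigr)\bra{\mathbf v}=p'\sum_{\mathbf v}\braket{\mathbf a|\mathbf v}_\lambda\bra{\mathbf v}+p\sum_{\mathbf v}\braket{\mathbf a|\mathbf v}_\mu\bra{\mathbf v}.$$
Applying \eqref{E:LinPrcsn} for $\lambda$ and for $\mu$, the right-hand side becomes $p'\bra{\mathbf a}+p\bra{\mathbf a}=\bra{\mathbf a}\bra{\mathbf a}\,\underline p$. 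By idempotence \eqref{E:idemptnc}, this equals $\bra{\mathbf a}$.

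To stay purely barycentric-algebraic, one can instead observe that the map $(a,b)\mapsto ab\,\underline p$ is a barycentric homomorphism $\Pi\times\Pi\to\Pi$, since barycentric algebras are entropic. Applying it to the pair of expansions of $\bra{\mathbf a}$ from $\lambda$ and $\mu$ then yields the required expansion for $\nu$.

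The main obstacle, modest as it is, lies in justifying this rearrangement: redistributing a combination of two convex combinations into a single convex combination with mixed weights. Either we work in the ambient vector space, where it is elementary linear algebra, or we invoke the entropic law (mediality) of barycentric algebras. Once this is done, the partition of unity property for $\nu$ follows automatically from Remark~\ref{R:PolCoSys}(a), and the theorem is proved.
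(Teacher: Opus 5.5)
Your proposal is correct and is essentially the paper's argument. The paper also forms $\lambda\lambda'\underline p$ pointwise, regroups $\sum_{\mathbf v}\big(\ket{\mathbf v}_\lambda\ket{\mathbf v}_{\lambda'}\underline p\big)\bra{\mathbf v}$ into $\big(\sum_{\mathbf v}\ket{\mathbf v}_\lambda\bra{\mathbf v}\big)\big(\sum_{\mathbf v}\ket{\mathbf v}_{\lambda'}\bra{\mathbf v}\big)\underline p$, and finishes with $1_\Pi1_\Pi\underline p=1_\Pi$ by idempotence, while you are a bit more explicit in checking that the mixed weights stay in $I$ and in justifying the regrouping via the ambient vector space, which suffices on its own (the entropic-law aside is looser, since two different expansions of $\bra{\mathbf a}$ do not literally form a single barycentric combination of diagonal pairs in $\Pi\times\Pi$).
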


\begin{proof}
Using the notation of \eqref{E:CoSyKapa}, a coordinate system $\lambda$ is specified by the element $(\ket{\mathbf v_1}_\lambda,\dots,\ket{\mathbf v_n}_\lambda)$ of the pointwise barycentric algebra $(I^\Pi)^V$. Now, consider coordinate systems $\lambda,\lambda'$ and $p\in I^\circ$. By \eqref{E:BraktPid}, we have
\begin{equation}\label{E:lamblamb}
\sum_{\mathbf v\in V}\ket{\mathbf v}_{\lambda}\bra{\mathbf v}=1_\Pi
\quad
\mbox{and}
\quad
\sum_{\mathbf v\in V}\ket{\mathbf v}_{\lambda'}\bra{\mathbf v}=1_\Pi\,.
\end{equation}
The potential coordinate system $\lambda\lambda'\underline p$ is given by
$$
\ket{\mathbf v}_{\lambda\lambda'\underline p}
=\ket{\mathbf v}_{\lambda}\ket{\mathbf v}_{\lambda'}\underline p
$$
for $\mathbf{v}\in V$. Then
\begin{align*}
\sum_{\mathbf v\in V}\ket{\mathbf v}_{\lambda\lambda'\underline p}\bra{\mathbf v}
&
=
\sum_{\mathbf v\in V}\left(\ket{\mathbf v}_{\lambda}\ket{\mathbf v}_{\lambda'}\underline p
\right)\bra{\mathbf v}
=
\sum_{\mathbf v\in V}
\left(\ket{\mathbf v}_{\lambda}\bra{\mathbf v}\right)
\left(\ket{\mathbf v}_{\lambda'}\bra{\mathbf v}\right)
\underline p
\\
&
=
\left(
\sum_{\mathbf{v}\in V}\ket{\mathbf v}_{\lambda}\bra{\mathbf v}
\right)
\left(
\sum_{\mathbf{v}\in V}\ket{\mathbf v}_{\lambda'}\bra{\mathbf v}
\right)
\underline p
=1_\Pi1_\Pi\underline p
=1_\Pi
\end{align*}
using \eqref{E:lamblamb} for the penultimate equality. By Definition~\ref{T:PolCoSys}, it follows that $\lambda\lambda'\underline p$ is indeed a coordinate system.
\end{proof}

\begin{remark}\label{R:2ndaryPT}
Combinatorially, the convex set $K_\Pi$ of coordinate systems may be identified as the \emph{secondary polytope} of $\Pi$ \cite[p.7, \S5.1]{DLRS} or \cite[Ch.~7]{GelKapZel}. Thus, Theorem~\ref{T:PolCoSys} endows the secondary polytope with a rich geometric structure \cite{RJSAZD2}.
\end{remark}

\section{Chordal decompositions and coordinates}\label{S:ChDeCaCo}

In this chapter, we present an alternative and more local approach to the coordinates of polytopes, namely \emph{chordal coordinates}. The key idea is to decompose the polytope into simplices known as \emph{regions}, considering the resulting atlas of charts of the polytope, and directly taking the volumetric coordinates for the region within which a given point of the polytope lies.

In the interest of keeping the discussion relatively brief and accessible, and also to take advantage of special features that are available in the plane, we will restrict to the case of polygons.
Chordal coordinates are straightforward from the geometric point of view, but a certain measure of algebraic, topological and combinatorial sophistication is required for their efficient handling. In particular, we draw on the formalism established in the preceding chapter.

\subsection{Non-crossing chordal decomposition of polygons}

\subsubsection{Combinatorial and geometric vertex notation}\label{SSS:CombGeom}

Throughout this section, we will consider a convex polygon $\Pi$ presented as the convex hull of the counter-clockwise ordered sequence $\mathbf v_1,\dots,\mathbf v_n$ of extreme points (vertices) located around its boundary. It is often convenient to abbreviate the vertex set as the ordered set $V=\set{1<2<\ldots<n}$. The abbreviated notation refers to \emph{combinatorial} vertices, whereas the vertices in the full notation are described as \emph{geometric}.

\begin{definition}[Standard ordering]\label{D:OrdSbSet}
Consider a subset $S$ of $V$.
\begin{enumerate}
\item[$(\mathrm a)$]
If $|S|>2$, the \emph{standard order} on $S$ is its induced order.
\item[$(\mathrm b)$]
The \emph{standard order} on $S=\set{1,n}$ is $\set{n<1}$.
\item[$(\mathrm c)$]
If $S\ne\set{1,n}$ and $|S|=2$, then the \emph{standard order} on $S$ is its induced order.
\end{enumerate}
\end{definition}

\begin{lemma}\label{L:AdVxPair}
Reading vertex indices $i$ modulo $n$, the pairs $i<i+1$ are always in standard order.
\end{lemma}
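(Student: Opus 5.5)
The plan is a direct case analysis on the pair $\set{i,i+1}$, with indices read modulo $n$ in $\set{1,\dots,n}$. We take the ``pair $i<i+1$'' to mean the two-element set $S=\set{i,i+1}$, listed with $i$ first and $i+1$ (reduced modulo $n$) second. We then check that this listing agrees with the standard order of Definition~\ref{D:OrdSbSet}. Because $|S|=2$, part (a) of the definition never applies. Only parts (b) and (c) are relevant, and the split is exactly according to whether $S=\set{1,n}$ or not. We assume $n\ge3$, as for any polygon, so that $1\ne n$ and the wraparound pair is genuinely a two-element set distinct from the others.

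First, take $1\le i\le n-1$. Then $i+1\le n$ needs no reduction, and $S=\set{i,i+1}$ consists of consecutive integers. Such a set equals $\set{1,n}$ only if $n-1=1$, which is excluded by $n\ge3$. So part (c) applies, and the standard order is the induced order, which is $i<i+1$ as required.

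Second, take $i=n$. Then $i+1\equiv 1 \pmod n$, so the pair is $n<1$ and $S=\set{1,n}$. Part (b) of Definition~\ref{D:OrdSbSet} specifies the standard order on this set as $\set{n<1}$, which again agrees with the listing $i<i+1$.

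These two cases exhaust all residues, which completes the argument. There is no real obstacle here. The only point needing care is bookkeeping: stating the convention that residues are represented in $\set{1,\dots,n}$, and noting that $n\ge3$ keeps the wraparound case from coinciding with a case governed by part (c). The lemma then simply confirms that Definition~\ref{D:OrdSbSet}(b) was designed so that every boundary edge is oriented counter-clockwise.
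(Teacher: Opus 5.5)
Your proof is correct and is the same direct check against Definition~\ref{D:OrdSbSet} that the paper relies on: part (c) gives the induced order for $1\le i\le n-1$, and part (b) gives $n<1$ for the wraparound pair. The paper states the lemma without proof. Your observation that $n\ge 3$ is needed is a worthwhile addition, since for $n=2$ the pair $\{1,2\}=\{1,n\}$ would have standard order $2<1$.
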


For $j<k$ in the standard order (including $n<1$ and excluding $1<n$ by Definition~\ref{D:OrdSbSet}), the combinatorial notation $jk$ will stand for the vector $\mathbf v_k-\mathbf v_j$ or the (oriented) line segment $\mathbf v_{jk}$ from $\mathbf v_j$ to $\mathbf v_k$, and will be used to index functions or other constructs (such as the full oriented affine line $\mathcal L_{jk}$) that are determined by the oriented segment. The \emph{areal functions} of Definition~\ref{D:ArealFun} provide an important example.

\subsubsection{Skeleton graphs and chords}\label{SSS:SkGrChrd}

The \emph{skeleton} of the polygon $\Pi$ is the cyclic graph $C_n$ constituted by the vertices and undirected edges of the polygon. In the cyclic graph $C_n$, a \emph{chord} is an edge connecting vertices which are not adjacent in $C_n$.

\begin{definition}\label{D:SkGrChrd}
A \emph{chordal decomposition} of the polygon $\Pi$ with ordered vertex set $V=\set{\mathbf v_1<\mathbf v_2<\dots<\mathbf v_n}$ is a system of $n-3$ non-crossing chords of $C_n$ that decompose $\Pi$ as a union of $n-2$ simplices (triangles) whose vertices are vertices of $\Pi$. The $n-2$ triangles constitute the \emph{regions} of the decomposition. Each region (for $n>3$) is bounded by chords that form its \emph{internal boundaries}, and possibly (indeed, certainly for $n<6$) edges of the polygon that form its \emph{external boundaries}.
\end{definition}

Given any one chordal decomposition, we obtain others by the action of the dihedral group $D_n$ (of degree $n$ and order $2n$) as the automorphism group of the graph $C_n$ (cf. \cite{BowReg}). Leaving fixed the vertex set $V$ of the polygon $\Pi$, the elements of $D_n$ act on the $n-3$ chords of the decomposition. Explicit examples given in Remark~\ref{R:VerChTab}(c) provide a representative illustration of this action.

\subsubsection{Areal coordinate functions}\label{SSS:ArCordFun}

Consider a triangle in the Euclidean plane whose vertices are
$
\mathbf v_0=
\begin{bmatrix}
v_{00} &v_{01}
\end{bmatrix},
\mathbf v_1=
\begin{bmatrix}
v_{10} &v_{11}
\end{bmatrix}
\mbox{ and }
\mathbf v_2=
\begin{bmatrix}
v_{20} &v_{21}
\end{bmatrix}
$
in counterclockwise order. The formula for the signed area of this triangle is
\begin{equation}\label{E:Aov0v1v2}
A\left(\mathbf v_0,\mathbf v_1,\mathbf v_2\right)
=
\tfrac12\det[
\mathbf v_1-\mathbf v_0,
\mathbf v_2-\mathbf v_0
]
=
\frac12
\begin{vmatrix}
1 &v_{00} &v_{01}\\
1 &v_{10} &v_{11}\\
1 &v_{20} &v_{21}\\
\end{vmatrix}
\end{equation}

The following proposition recalls the convexity of the area $A\left(\mathbf v_0,\mathbf v_1,\mathbf v_2\right)$ with respect to variation of the vertex $\mathbf v_0$.

\begin{proposition}\label{P:Aov0v1v2}
Consider vectors
$$
\mathbf v_0=
\begin{bmatrix}
v_{00} &v_{01}
\end{bmatrix},
\mathbf v_0'=
\begin{bmatrix}
v_{00}' &v_{01}'
\end{bmatrix},
\mathbf v_1=
\begin{bmatrix}
v_{10} &v_{11}
\end{bmatrix}
\mbox{ and }
\mathbf v_2=
\begin{bmatrix}
v_{20} &v_{21}
\end{bmatrix}
$$
in the plane. Then
$$
A\big((1-p)\mathbf v_0+p\mathbf v_0',\mathbf v_1,\mathbf v_2\big)
=
(1-p)
A\left(\mathbf v_0,\mathbf v_1,\mathbf v_2\right)
+
p
A\left(\mathbf v_0',\mathbf v_1,\mathbf v_2\right)
$$
for each real number $p$.
\end{proposition}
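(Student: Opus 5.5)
The plan is to read the identity off the determinant formula \eqref{E:Aov0v1v2}, using that a determinant is multilinear in its rows. In the $3\times 3$ form, the vertex $\mathbf v_0$ appears only in the first row, which is $\begin{bmatrix}1 & v_{00} & v_{01}\end{bmatrix}$. For the point $(1-p)\mathbf v_0+p\mathbf v_0'$, that first row becomes
\[
\begin{bmatrix}1 & (1-p)v_{00}+pv_{00}' & (1-p)v_{01}+pv_{01}'\end{bmatrix}
=(1-p)\begin{bmatrix}1 & v_{00} & v_{01}\end{bmatrix}+p\begin{bmatrix}1 & v_{00}' & v_{01}'\end{bmatrix},
\]
since $(1-p)\cdot 1+p\cdot 1=1$. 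This is the one point needing care: the leading entry $1$ must also split affinely, and it does precisely because the two weights sum to $1$.

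Next, linearity of the determinant in its first row, with the other two rows held fixed, gives that the determinant for the combined point equals $(1-p)$ times the determinant with first row $\begin{bmatrix}1 & v_{00} & v_{01}\end{bmatrix}$, plus $p$ times the determinant with first row $\begin{bmatrix}1 & v_{00}' & v_{01}'\end{bmatrix}$. Multiplying through by $\tfrac12$ yields
\[
A\big((1-p)\mathbf v_0+p\mathbf v_0',\mathbf v_1,\mathbf v_2\big)=(1-p)A(\mathbf v_0,\mathbf v_1,\mathbf v_2)+pA(\mathbf v_0',\mathbf v_1,\mathbf v_2).
\]
Nothing in this argument restricts $p$ to $I$, so the identity holds for every real $p$, as the statement claims.

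There is no real obstacle here. As a check, one can use the other form $\tfrac12\det[\mathbf v_1-\mathbf v_0,\mathbf v_2-\mathbf v_0]$: expanding gives $A=\tfrac12\big(\det[\mathbf v_1,\mathbf v_2]-\det[\mathbf v_0,\mathbf v_2]-\det[\mathbf v_1,\mathbf v_0]\big)$. Here the first term is constant in $\mathbf v_0$ and the other two are linear in $\mathbf v_0$, so $A$ is affine in $\mathbf v_0$. An affine function preserves affine combinations $(1-p)\mathbf v_0+p\mathbf v_0'$, which again gives the identity.
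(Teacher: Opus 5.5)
Your proof is correct and takes essentially the paper's route. The paper expands the $3\times3$ determinant in \eqref{E:Aov0v1v2} along its first row (Laplace expansion), which shows $A$ is a constant plus a linear function of $(v_{00},v_{01})$; this is the same first-row linearity you invoke, and your remark that the leading $1$ splits because $(1-p)+p=1$ spells out the step that makes the affine combination go through.
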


\begin{proof}
The result follows by consideration of the Laplace expansion
$$
A\left(\mathbf v_0,\mathbf v_1,\mathbf v_2\right)=
\frac12
\left(
\begin{vmatrix}
v_{10} &v_{11}\\
v_{20} &v_{21}
\end{vmatrix}
-v_{00}
\begin{vmatrix}
1 &v_{11}\\
1 &v_{21}
\end{vmatrix}
+v_{01}
\begin{vmatrix}
1 &v_{10}\\
1 &v_{20}
\end{vmatrix}
\right)
$$
of the determinant of \eqref{E:Aov0v1v2} across its first row.
\end{proof}

\begin{proposition}\label{P:ArCordFn}
Let $\tau$ be a triangle whose vertices $\mathbf v_0<\mathbf v_1<\mathbf v_2<\mathbf v_0$ are oriented in cyclic counterclockwise order. For $\mathbf v_i\in\set{\mathbf v_0,\mathbf v_1,\mathbf v_2}$, the formula
\begin{equation}\label{E:ArCordFn}
\braket{\mathbf x|\mathbf v_i}_\tau=
\frac{A(\mathbf v_{i-1},\mathbf x,\mathbf v_{i+1})}{A(\mathbf v_0,\mathbf v_1,\mathbf v_2)}
\end{equation}
gives the areal coordinate of a point $\mathbf x$ of the triangle with respect to the vertex $\mathbf v_i$. The suffices are taken modulo 3.
\end{proposition}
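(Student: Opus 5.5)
The claim to establish is that the three functions $\ket{\mathbf v_i}_\tau$ given by \eqref{E:ArCordFn} satisfy the barycentric property \eqref{E:LinPrcsn} on $\tau$, take values in $I$, and are the unique such coefficients. In other words, $\lambda_i=\braket{\mathbf x|\mathbf v_i}_\tau$ should satisfy $\mathbf x=\sum_i\lambda_i\mathbf v_i$ with $\lambda_i\in I$. The plan rests on the affineness of the area in one argument, which is Proposition~\ref{P:Aov0v1v2}, together with the fact that three affinely independent points form an affine frame of the plane.

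\textbf{Step 1 (existence and uniqueness of coefficients).} Since $A(\mathbf v_0,\mathbf v_1,\mathbf v_2)>0$ for a nondegenerate counterclockwise triangle, the vectors $\mathbf v_1-\mathbf v_0$ and $\mathbf v_2-\mathbf v_0$ are linearly independent. Hence every point $\mathbf x$ of the plane has a unique expression $\mathbf x=\sum_j\mu_j\mathbf v_j$ with $\sum_j\mu_j=1$. For $\mathbf x\in\tau=\braket{\set{\mathbf v_0,\mathbf v_1,\mathbf v_2}}$ we also have $\mu_j\in I$, by \S\ref{SSS:PrDiWeAv}.

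\textbf{Step 2 (identify $\mu_i$ with the area ratio).} The area $A$ is invariant under cyclic permutation of its arguments, since the determinant in \eqref{E:Aov0v1v2} is unchanged by a cyclic row permutation. So we may move $\mathbf x$ into the first slot: $A(\mathbf v_{i-1},\mathbf x,\mathbf v_{i+1})=A(\mathbf x,\mathbf v_{i+1},\mathbf v_{i-1})$. Proposition~\ref{P:Aov0v1v2} extends by induction to affine combinations with any number of terms, because the Laplace expansion is affine in the first row. Substituting $\mathbf x=\sum_j\mu_j\mathbf v_j$ then gives $\sum_j\mu_jA(\mathbf v_j,\mathbf v_{i+1},\mathbf v_{i-1})$. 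The terms with $j=i\pm1$ vanish because they contain a repeated vertex. The remaining term is $A(\mathbf v_i,\mathbf v_{i+1},\mathbf v_{i-1})=A(\mathbf v_0,\mathbf v_1,\mathbf v_2)$, since $(i,i+1,i-1)$ is a cyclic shift of $(0,1,2)$ modulo $3$. Dividing by this positive quantity yields $\braket{\mathbf x|\mathbf v_i}_\tau=\mu_i$.

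\textbf{Step 3 (conclusion).} Step 2 shows that the formula reproduces the unique barycentric coordinates. These lie in $I$ and satisfy \eqref{E:LinPrcsn}, and \eqref{E:BraketP1} follows via Lemma~\ref{L:BPimpPO1}. So $\mathbf v_i\mapsto\ket{\mathbf v_i}_\tau$ is a coordinate system for $\tau$ in the sense of Definition~\ref{D:PolCoSys}. As a check, the numerators are nonnegative for $\mathbf x\in\tau$, because $\mathbf x$ lies on the inner side of the oriented line through $\mathbf v_{i+1}$ and $\mathbf v_{i-1}$.

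The main obstacle is bookkeeping of orientation and signs, namely the correct cyclic order of arguments so that the surviving term is $+A(\mathbf v_0,\mathbf v_1,\mathbf v_2)$ rather than its negative. It also needs a brief justification that Proposition~\ref{P:Aov0v1v2}, stated for real $p$, extends to three-term affine combinations. I would handle the latter by writing $\mathbf x=(1-\mu_0)\mathbf y+\mu_0\mathbf v_0$ with $\mathbf y$ on the line through $\mathbf v_1$ and $\mathbf v_2$, and applying the proposition twice.
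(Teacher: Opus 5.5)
Your argument is correct. The paper states this proposition without proof, as the standard areal-coordinate formula. Your derivation uses the affineness of $A$ in one vertex (Proposition~\ref{P:Aov0v1v2}) and the cyclic invariance of the determinant in \eqref{E:Aov0v1v2}, which is exactly the argument suggested by placing Proposition~\ref{P:Aov0v1v2} immediately before it.

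The only loose end is in your proposed two-step extension to three-term combinations, which breaks down when $\mu_0=1$. For $\mathbf x\in\tau$ that case is just $\mathbf x=\mathbf v_0$, and it is already covered by your remark that the Laplace expansion is affine in the first row.
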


\subsubsection{Areal functions}\label{SSS:ArealFun}

\begin{definition}\label{D:ArealFun}
Consider combinatorial vertices $j<k$ in standard order.
\begin{enumerate}
\item[$(\mathrm a)$]
The element
\begin{equation}\label{E:ArealFun}
\ket{j\wedge k}\colon \Pi\to\mathbb R;
\mathbf x\mapsto A(\mathbf x,\mathbf v_j,\mathbf v_k)
\end{equation}
of $\mathbb R^\Pi$ is the \emph{areal function} defined by the pair $j<k$.
\item[$(\mathrm b)$]
The element
\begin{equation*}
\ket{k\wedge j}\colon \Pi\to\mathbb R;
\mathbf x\mapsto A(\mathbf x,\mathbf v_k,\mathbf v_j)
=-A(\mathbf x,\mathbf v_j,\mathbf v_k)
\end{equation*}
of $\mathbb R^\Pi$ is the \emph{reverse areal function} defined by the pair $j<k$.
\item[$(\mathrm c)$]
If $\mathbf v_{jk}$ is an edge of the polygon, the areal function $\ket{j\wedge k}$ is described as an \emph{external boundary function}.
\item[$(\mathrm d)$]
If $\mathbf v_{jk}$ is a chord of a given chordal decomposition of the polygon, the areal function $\ket{j\wedge k}$ is described as an \emph{internal boundary function}.
\end{enumerate}
\end{definition}

\begin{lemma}\label{L:ArFuBaHo}
By Proposition~\ref{P:Aov0v1v2}, the areal function \eqref{E:ArealFun} is a barycentric algebra homomorphism from the convex set $\Pi$ to the convex set $\mathbb R$.
\end{lemma}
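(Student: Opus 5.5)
We must show that the areal function $\ket{j\wedge k}$ commutes with every basic operation $\underline p$ for $p\in I^\circ$. Here $\Pi$ carries the barycentric structure of Example~\ref{X:CoAfBAlg}, namely $\mathbf x\mathbf y\,\underline p=(1-p)\mathbf x+p\mathbf y$, and $\mathbb R$ carries the same structure on the real line. So, by Definition~\ref{D:BaHomSet}(a), the goal is the identity
$$
\braket{\mathbf x\mathbf y\,\underline p\,|\,j\wedge k}
=\braket{\mathbf x|j\wedge k}\braket{\mathbf y|j\wedge k}\underline p
=(1-p)\braket{\mathbf x|j\wedge k}+p\braket{\mathbf y|j\wedge k}
$$
for all $\mathbf x,\mathbf y\in\Pi$ and all $p\in I^\circ$.

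The plan is to specialize Proposition~\ref{P:Aov0v1v2}. Take $\mathbf v_0=\mathbf x$, $\mathbf v_0'=\mathbf y$, $\mathbf v_1=\mathbf v_j$ and $\mathbf v_2=\mathbf v_k$. That proposition holds for every real $p$, and in particular for $p\in I^\circ$. It then gives
$$
A\big((1-p)\mathbf x+p\mathbf y,\mathbf v_j,\mathbf v_k\big)=(1-p)A(\mathbf x,\mathbf v_j,\mathbf v_k)+pA(\mathbf y,\mathbf v_j,\mathbf v_k).
$$
By \eqref{E:ArealFun}, the left side is exactly $\braket{\mathbf x\mathbf y\,\underline p\,|\,j\wedge k}$ and the right side is exactly $\braket{\mathbf x|j\wedge k}\braket{\mathbf y|j\wedge k}\underline p$. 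This is the required identity.

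Two small points need checking. First, $\mathbf x\mathbf y\,\underline p$ lies in $\Pi$, because $\Pi$ is convex. Second, Proposition~\ref{P:Aov0v1v2} uses the determinant formula \eqref{E:Aov0v1v2} with no orientation hypothesis, so it applies to the signed area $A(\mathbf x,\mathbf v_j,\mathbf v_k)$ for arbitrary points. The same argument works verbatim for the reverse areal function $\ket{k\wedge j}=-\ket{j\wedge k}$, since negation preserves affine combinations.

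No real obstacle is expected. The only point requiring care is to confirm that the barycentric operations on $\mathbb R$ and on $\Pi$ are both the weighted means of Example~\ref{X:CoAfBAlg}, so that the affine identity of Proposition~\ref{P:Aov0v1v2} is precisely the homomorphism condition \eqref{E:BaryAHom}.
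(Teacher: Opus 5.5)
Your proposal is correct and takes the same approach as the paper: the lemma is justified by specializing Proposition~\ref{P:Aov0v1v2} to $\mathbf v_0=\mathbf x$, $\mathbf v_0'=\mathbf y$, $\mathbf v_1=\mathbf v_j$, $\mathbf v_2=\mathbf v_k$, and reading the resulting affine identity as the homomorphism condition \eqref{E:BaryAHom} for the weighted-mean operations of Example~\ref{X:CoAfBAlg}. Your two side checks (that $\mathbf x\mathbf y\,\underline p$ stays in $\Pi$, and that the signed-area formula needs no orientation hypothesis) cover the only details the paper leaves implicit.
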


\begin{lemma}\label{L:ArealFun}
Fix a pair $j<k$ of combinatorial vertices in standard order, and an element $\mathbf a$ of $\Pi$. Consider the line $\mathcal L_{jk}$ joining the vectors $\mathbf v_j$ and $\mathbf v_k$, traversed in the direction from $\mathbf v_j$ to $\mathbf v_k$.
\begin{enumerate}
\item[$(\mathrm a)$]
If $\mathbf a$ lies on $\mathcal L_{jk}$, then $\braket{\mathbf a|j\wedge k}=0$.
\item[$(\mathrm b)$]
If $\mathbf a$ lies to the left of $\mathcal L_{jk}$, then $\braket{\mathbf a|j\wedge k}>0$.
\item[$(\mathrm c)$]
If $\mathbf a$ lies to the right of $\mathcal L_{jk}$, then $\braket{\mathbf a|j\wedge k}<0$.
\end{enumerate}
\end{lemma}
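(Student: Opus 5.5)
The plan is to compute $\braket{\mathbf a|j\wedge k}=A(\mathbf a,\mathbf v_j,\mathbf v_k)$ directly from the determinant formula \eqref{E:Aov0v1v2}. With $\mathbf d=\mathbf v_k-\mathbf v_j$ as the direction vector of $\mathcal L_{jk}$, the alternating multilinearity of the determinant gives
\[
A(\mathbf a,\mathbf v_j,\mathbf v_k)
=\tfrac12\det[\mathbf v_j-\mathbf a,\mathbf v_k-\mathbf a]
=\tfrac12\det[\mathbf v_j-\mathbf a,\mathbf d]
=\tfrac12\det[\mathbf d,\mathbf a-\mathbf v_j]\,,
\]
after subtracting the first column from the second and then swapping columns together with a sign change. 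So the sign of the areal function is the sign of the planar cross product $\mathbf d\times(\mathbf a-\mathbf v_j)$.

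Next, decompose $\mathbf a-\mathbf v_j=s\mathbf d+t\mathbf d^\perp$, where $\mathbf d^\perp$ is $\mathbf d$ rotated counterclockwise by a right angle. This is possible since $\mathbf d\neq\mathbf 0$, as $j\ne k$ are distinct vertices. Then $\det[\mathbf d,\mathbf a-\mathbf v_j]=t\det[\mathbf d,\mathbf d^\perp]=t|\mathbf d|^2$. The point $\mathbf a$ lies on $\mathcal L_{jk}$ exactly when $t=0$, and to the left of $\mathcal L_{jk}$, traversed from $\mathbf v_j$ to $\mathbf v_k$, exactly when $t>0$, by the definition of ``left'' as the counterclockwise side. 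It lies to the right exactly when $t<0$. This yields (a), (b) and (c) at once.

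Alternatively, (a) follows from Lemma~\ref{L:ArFuBaHo}. The areal function is affine along $\mathcal L_{jk}$, and it vanishes at $\mathbf v_j$ and at $\mathbf v_k$ because the determinant then has a repeated row. Hence it vanishes on the whole line. For (b) and (c), the function is affine on the plane, vanishes exactly on the line, and so has constant sign on each open half-plane. That sign can be fixed by evaluating at the point $\mathbf v_j+\mathbf d^\perp$, where $\det[\mathbf d,\mathbf d^\perp]>0$.

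The only delicate point is a convention rather than a computation: ``left'' has to be matched with positive orientation, i.e. the counterclockwise orientation used in \eqref{E:Aov0v1v2}. This matching is what the choice of $\mathbf d^\perp$ makes precise.
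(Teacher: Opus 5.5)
The paper gives no proof of this lemma. It is stated as the standard orientation property of the signed area \eqref{E:Aov0v1v2}, and your first argument is a correct and complete verification of it.

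\textbf{First argument.} The reduction $A(\mathbf a,\mathbf v_j,\mathbf v_k)=\tfrac12\det[\mathbf d,\mathbf a-\mathbf v_j]$ is right. With $\mathbf d=(d_0,d_1)$ and $\mathbf d^\perp=(-d_1,d_0)$, one indeed gets $\det[\mathbf d,\mathbf d^\perp]=d_0^2+d_1^2$ under the row convention of \eqref{E:Aov0v1v2}. You are also right that the only genuine content is the convention identifying ``left'' with the counterclockwise side. This is the convention the paper uses when it reads off the signs in Table~\ref{T:vertchrd}, for instance the entry $+$ for $\braket{4|1\wedge 3}$ in the middle hexagon. The standard-order hypothesis plays no role in your argument beyond fixing the direction of traversal, so the computation holds for any ordered pair of distinct vertices.

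\textbf{Alternative route.} This route is closer to the paper's barycentric viewpoint, but it needs three small touches to be complete.
\begin{enumerate}
\item[(i)] Lemma~\ref{L:ArFuBaHo} only asserts the homomorphism property on $\Pi$. Affinity on the whole plane comes from Proposition~\ref{P:Aov0v1v2}, which holds for every real $p$. For (a) alone the restricted version suffices, since a point of $\Pi$ on $\mathcal L_{jk}$ must lie in the segment from $\mathbf v_j$ to $\mathbf v_k$ by extremality of the vertices.
\item[(ii)] The claim that the function vanishes exactly on the line requires the affine function to be nonconstant. This again follows from $\det[\mathbf d,\mathbf d^\perp]\neq0$.
\item[(iii)] Evaluating at $\mathbf v_j+\mathbf d^\perp$ fixes only the sign on the left half-plane. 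The sign on the right follows by evaluating at $\mathbf v_j-\mathbf d^\perp$, where the value is $-\tfrac12|\mathbf d|^2$.
\end{enumerate}
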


\begin{lemma}\label{L:aiiplus1}
Consider an element $\mathbf a$ of $\Pi$.
\begin{enumerate}
\item[$(\mathrm a)$]
For $1\le i\le n$, we have $\braket{\mathbf a|i\wedge(i+1)}\ge0$ for the external boundary functions specified using the convention of Lemma~\ref{L:AdVxPair}.
\item[$(\mathrm b)$]
We have $\braket{\mathbf a|i\wedge(i+1)}>0$ for all $1\le i\le n$ if and only if $\mathbf a$ lies in the interior of $\Pi$.
\item[$(\mathrm c)$]
Suppose that $\mathbf a$ is a combinatorial vertex $j$ with $j\notin\set{i,(i+1)}$. Then $\braket{j|i\wedge(i+1)}>0$.
\end{enumerate}
\end{lemma}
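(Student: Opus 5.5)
The idea is to reduce everything to Lemma~\ref{L:ArealFun} via convexity of $\Pi$ and the counterclockwise ordering of the vertices. The key geometric fact is that for a convex polygon with vertices $\mathbf v_1,\dots,\mathbf v_n$ in counterclockwise order, each edge line $\mathcal L_{i(i+1)}$ is a supporting line of $\Pi$. The whole polygon lies in the closed half-plane to the left of $\mathcal L_{i(i+1)}$ when the line is traversed from $\mathbf v_i$ to $\mathbf v_{i+1}$. Lemma~\ref{L:AdVxPair} ensures that $i<i+1$ (read modulo $n$, including $n<1$) is in standard order, so $\ket{i\wedge(i+1)}$ is the external boundary function $\mathbf x\mapsto A(\mathbf x,\mathbf v_i,\mathbf v_{i+1})$ with exactly this orientation.

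\textbf{Part (c).} I would prove this first, since it drives the rest. For a vertex $j\notin\set{i,i+1}$, the three vertices $\mathbf v_i,\mathbf v_{i+1},\mathbf v_j$ appear in this cyclic counterclockwise order among the extreme points. Since $\mathbf v_j$ is an extreme point distinct from $\mathbf v_i$ and $\mathbf v_{i+1}$, it cannot lie on $\mathcal L_{i(i+1)}$. Otherwise three extreme points of a convex polygon would be collinear, and the middle one would fail to be extreme. Hence $A(\mathbf v_j,\mathbf v_i,\mathbf v_{i+1})=A(\mathbf v_i,\mathbf v_{i+1},\mathbf v_j)$ is the signed area of a nondegenerate counterclockwise triangle, and so it is positive. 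This is the step where I must justify carefully that the cyclic order of the extreme points gives a positive orientation for every such triple. That justification is the main obstacle, since it is the only place where genuine planar convex geometry enters. I would appeal to the standard characterization of counterclockwise convex position: all turns $\mathbf v_i\to\mathbf v_{i+1}\to\mathbf v_j$ are left turns.

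\textbf{Part (a).} Any $\mathbf a\in\Pi$ is a convex combination $\sum_j p_j\mathbf v_j$. By Lemma~\ref{L:ArFuBaHo}, $\ket{i\wedge(i+1)}$ is a barycentric homomorphism, so $\braket{\mathbf a|i\wedge(i+1)}=\sum_j p_j\braket{\mathbf v_j|i\wedge(i+1)}$. The terms with $j=i$ or $j=i+1$ vanish by Lemma~\ref{L:ArealFun}(a), and the remaining terms are positive by (c). Hence the value is $\ge0$.

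\textbf{Part (b).} The same expansion shows that $\braket{\mathbf a|i\wedge(i+1)}=0$ exactly when $\mathbf a$ is supported only on $\mathbf v_i$ and $\mathbf v_{i+1}$, that is, when $\mathbf a$ lies on the edge $\mathbf v_{i(i+1)}$. The boundary of the convex polygon $\Pi$ is the union of its edges. Therefore all $n$ values are positive iff $\mathbf a$ lies on no edge, iff $\mathbf a$ is interior.
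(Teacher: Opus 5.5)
The paper states this lemma without proof, treating it as immediate from the counterclockwise convex position of $\mathbf v_1,\dots,\mathbf v_n$ and Lemma~\ref{L:ArealFun}. So there is no competing argument to compare with, and your proposal is a correct way of supplying one.

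The orientation fact you flag in (c) is not a gap relative to the paper. The paper's own use of ``counterclockwise'' (in \eqref{E:Aov0v1v2} and Proposition~\ref{P:ArCordFn}) already presupposes that such triples have positive signed area. Be aware, though, that appealing to ``all turns $\mathbf v_i\to\mathbf v_{i+1}\to\mathbf v_j$ are left turns'' simply restates (c).

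A tidier organization takes the supporting-line fact from your opening paragraph as the single geometric input. Since $\Pi$ lies in the closed half-plane to the left of $\mathcal L_{i(i+1)}$, part (a) follows at once from Lemma~\ref{L:ArealFun}, with no convex-combination expansion. Part (c) then follows from (a) together with your observation that $\mathbf v_j$ cannot lie on $\mathcal L_{i(i+1)}$, since three distinct collinear points of $\Pi$ cannot all be extreme. Your order, proving (c) first and deducing (a) through Lemma~\ref{L:ArFuBaHo}, is valid but roundabout.

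Your argument for (b) is sound. The non-uniqueness of the weights $p_j$ when $n>3$ is harmless: vanishing of $\braket{\mathbf a|i\wedge(i+1)}$ forces zero weight off $\set{\mathbf v_i,\mathbf v_{i+1}}$ in every convex representation of $\mathbf a$.

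The argument does, however, rest on a second standard fact that you should cite explicitly: the topological boundary of the two-dimensional polygon $\Pi$ is exactly the union of its edges. The nontrivial half of this, that every boundary point lies on an edge, is equivalent to $\Pi$ containing every point that lies strictly to the left of all $n$ edge lines. That is essentially the half-plane description of $\Pi$.
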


\subsubsection{The hexagon as a representative example}

Figure~\ref{F:HexDcomp} displays three distinct decompositions of a hexagon into $4$ triangles by means of three non-crossing chords. The three topologically distinct types provide a full set of representatives for the orbits of the dihedral group $D_6$ on the chordally subdivided graph $C_6$. The chords are depicted by thin lines, and the edges of the hexagon by thick lines. The boxed labels, which distinguish the three topological types, indicate the (\emph{chordal}) \emph{degree sequence} (CDS) of the graph formed by the chords.
For example, the chord graph in the middle case has $\mathbf v_1$ and $\mathbf v_4$ as vertices of degree one, together with $\mathbf v_3$ and $\mathbf v_6$ as vertices of degree two.

The examples shown in Figure~\ref{F:HexDcomp} represent the general situation. Indeed, $6$ is the smallest number of polygon vertices capable of exhibiting all the potentialities: for $2<n<6$, the only possible chordal degree sequence is $1^{(n-3)}(n-3)$. Note that the region $\mathbf v_1\mathbf v_3\mathbf v_5$ of the third hexagon ($2^3$) has no external boundaries. Given the general nature of the examples from Figure~\ref{F:HexDcomp}, their subsequent discussion will suffice to indicate the analysis of a general polygon.

\begin{figure}[hbt]
  \centering
  \begin{tikzpicture}
    \coordinate (Origin)   at (0,0);
    \coordinate (XAxisMin) at (-1,0);
    \coordinate (XAxisMax) at (10.5,0);
    \coordinate (YAxisMin) at (0,-1);
    \coordinate (YAxisMax) at (0,3.5);
        \clip (-1,-1) rectangle (11cm,3.5cm);
\node[label=above:
{
$
\boxed{1^33}
$
}
]
at (0.9,2.5) {};
\draw[thick] (0,0) -- (0,1);
\draw[thick,<-] (0,0.4) -- (0,1);
\draw[thick] (0,0) -- (1,0);
\draw[thick,->] (0,0) -- (0.6,0);
\draw[thick] (1,0) -- (2,1);
\draw[thick,->] (1,0) -- (1.6,0.6);
\draw[thick] (1,2) -- (2,2);
\draw[thick,<-] (1.4,2) -- (2,2);
\draw[thick] (2,1) -- (2,2);
\draw[thick,->] (2,1) -- (2,1.6);
\draw[thick] (0,1) -- (1,2);
\draw[thick,<-] (0.4,1.4) -- (1,2);
\node[label=right:{$\mathbf v_1$}, draw, circle, inner sep=2pt, fill] at (2,1) {};
\node[label=above:{$\mathbf v_2$}, draw, circle, inner sep=2pt, fill] at (2,2) {};
\node[label=above:{$\mathbf v_3$}, draw, circle, inner sep=2pt, fill] at (1,2) {};
\node[label=left:{$\mathbf v_4$}, draw, circle, inner sep=2pt, fill] at (0,1) {};
\node[label=below:{$\mathbf v_5$}, draw, circle, inner sep=2pt, fill] at (0,0) {};
\node[label=below:{$\mathbf v_6$}, draw, circle, inner sep=2pt, fill] at (1,0) {};
\draw (2,1) -- (1,2);
\draw[->] (2,1) -- (1.45,1.55);
\draw (2,1) -- (0,1);
\draw[->] (2,1) -- (0.95,1);
\draw (2,1) -- (0,0);
\draw[->] (2,1) -- (0.9,0.45);
\node[label=above:
{
$
\boxed{1^22^2}
$
}
]
at (4.9,2.5) {};
\draw[thick] (4,0) -- (4,1);
\draw[thick,<-] (4,0.4) -- (4,1);
\draw[thick] (4,0) -- (5,0);
\draw[thick,->] (4,0) -- (4.6,0);
\draw[thick] (5,0) -- (6,1);
\draw[thick,->] (5,0) -- (5.6,0.6);
\draw[thick] (5,2) -- (6,2);
\draw[thick,<-] (5.4,2) -- (6,2);
\draw[thick] (6,1) -- (6,2);
\draw[thick,->] (6,1) -- (6,1.6);
\draw[thick] (4,1) -- (5,2);
\draw[thick,<-] (4.4,1.4) -- (5,2);
\node[label=right:{$\mathbf v_1$}, draw, circle, inner sep=2pt, fill] at (6,1) {};
\node[label=above:{$\mathbf v_2$}, draw, circle, inner sep=2pt, fill] at (6,2) {};
\node[label=above:{$\mathbf v_3$}, draw, circle, inner sep=2pt, fill] at (5,2) {};
\node[label=left:{$\mathbf v_4$}, draw, circle, inner sep=2pt, fill] at (4,1) {};
\node[label=below:{$\mathbf v_5$}, draw, circle, inner sep=2pt, fill] at (4,0) {};
\node[label=below:{$\mathbf v_6$}, draw, circle, inner sep=2pt, fill] at (5,0) {};
\draw (6,1) -- (5,2);
\draw[->] (6,1) -- (5.45,1.55);
\draw (5,2) -- (5,0);
\draw[->] (5,2) -- (5,0.9);
\draw (4,1) -- (5,0);
\draw[->] (4,1) -- (4.55,0.45);
\node[label=above:
{
$
\boxed{2^3}
$
}
]
at (8.9,2.5) {};
\draw[thick] (8,0) -- (8,1);
\draw[thick,<-] (8,0.4) -- (8,1);
\draw[thick] (8,0) -- (9,0);
\draw[thick,->] (8,0) -- (8.6,0);
\draw[thick] (9,0) -- (10,1);
\draw[thick,->] (9,0) -- (9.6,0.6);
\draw[thick] (9,2) -- (10,2);
\draw[thick,<-] (9.4,2) -- (10,2);
\draw[thick] (10,1) -- (10,2);
\draw[thick,->] (10,1) -- (10,1.6);
\draw[thick] (8,1) -- (9,2);
\draw[thick,<-] (8.4,1.4) -- (9,2);
\node[label=right:{$\mathbf v_1$}, draw, circle, inner sep=2pt, fill] at (10,1) {};
\node[label=above:{$\mathbf v_2$}, draw, circle, inner sep=2pt, fill] at (10,2) {};
\node[label=above:{$\mathbf v_3$}, draw, circle, inner sep=2pt, fill] at (9,2) {};
\node[label=left:{$\mathbf v_4$}, draw, circle, inner sep=2pt, fill] at (8,1) {};
\node[label=below:{$\mathbf v_5$}, draw, circle, inner sep=2pt, fill] at (8,0) {};
\node[label=below:{$\mathbf v_6$}, draw, circle, inner sep=2pt, fill] at (9,0) {};
\draw (10,1) -- (9,2);
\draw[->] (10,1) -- (9.45,1.55);
\draw (9,2) -- (8,0);
\draw[->] (9,2) -- (8.45,0.9);
\draw (8,0) -- (10,1);
\draw[<-] (8.9,0.45) -- (10,1);
\end{tikzpicture}
\caption{Oriented, non-crossing chordal decompositions of hexagons.}
\label{F:HexDcomp}
\end{figure}
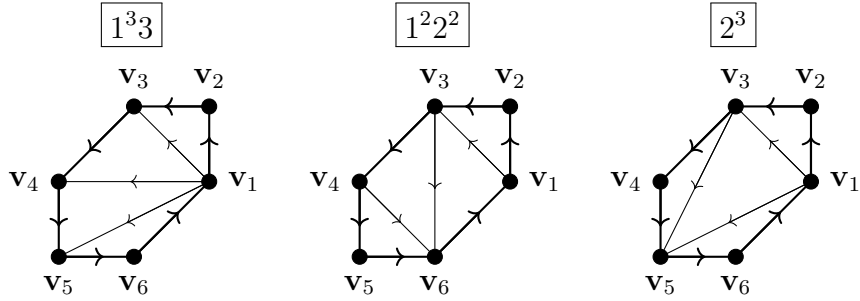

\renewcommand{\arraystretch}{1.5}
\begin{table}[hbt]
\begin{tabular}{||c||c|c|c||c|c|c||c|c|c||}
\hline
CDS:
&\multicolumn{3}{|c||}{$1^33$}
&\multicolumn{3}{|c||}{$1^22^2$}
&\multicolumn{3}{|c||}{$2^3$}
\\
\hline
Chord:
 &$\ket{13}$ &$\ket{14}$ &$\ket{15}$
 &$\ket{13}$ &$\ket{36}$ &$\ket{46}$
 &$\ket{13}$ &$\ket{15}$ &$\ket{35}$
\\
\hline
$\bra 1$
&$0$ &$0$ &$0$
&$0$ &$+$ &$+$
&$0$ &$0$ &$+$
\\
\hline
$\bra 2$
&$-$ &$-$ &$-$
&$-$ &$+$ &$+$
&$-$ &$-$ &$+$
\\
\hline
$\bra 3$
&$0$ &$-$ &$-$
&$0$ &$0$ &$+$
&$0$ &$-$ &$0$
\\
\hline
$\bra 4$
&$+$ &$0$ &$-$
&$+$ &$-$ &$0$
&$+$ &$-$ &$-$
\\
\hline
$\bra 5$
&$+$ &$+$ &$0$
&$+$ &$-$ &$-$
&$+$ &$0$ &$0$
\\
\hline
$\bra 6$
&$+$ &$+$ &$+$
&$+$ &$0$ &$0$
&$+$ &$+$ &$+$
\\
\hline
\end{tabular}
\vskip 2mm
\caption{The respective \emph{vertex/chord tables} for the hexagon decompositions depicted in Figure~\ref{F:HexDcomp}. The second row of the vertex/chord table labels the table columns that correspond to the three internal boundary functions, for each chord, for each of the three chordal decompositions labeled in the first table row by its chordal degree sequence. Then, for each combinatorial vertex $i$ of $A$, represented in the table as the bra row label $\bra i$, the table entry in the column $\ket{jk}$ is the sign of the areal function value $\braket{i|j\wedge k}$ determined by \eqref{E:ArealFun}.}
\label{T:vertchrd}
\end{table}

\subsubsection{Vertex/chord tables}\label{SSS:VerChTab}

\begin{remark}\label{R:VerChTab}
Consider Table~\ref{T:vertchrd}.
\vskip 2mm
\noindent
(a)
In the table, the fuller column labels $\ket{j\wedge k}$ would be more appropriate that $\ket{jk}$. The extra $\wedge$ symbols have been omitted for typographical reasons.
\vskip 2mm
\noindent
(b)
The CDS labels in the first row of the table are generic. More explicit specifications of the actual chord decompositions being used could appear as
\begin{equation}\label{E:ChDunits}
\mathds 1_{13}\circ\mathds 1_{14}\circ\mathds 1_{15}\,,\
\mathds 1_{13}\circ\mathds 1_{36}\circ\mathds 1_{46}\,,\
\mathds 1_{13}\circ\mathds 1_{15}\circ\mathds 1_{35}
\end{equation}
using the combinatorial notations $jk$ for the individual chords, and the Boolean notation of Lemma~\ref{L:IdInRtoB}(b) within the algebra $\mathbb R^\Pi$ to represent the specific chords that constitute the decomposition. In particular, the CDS labels may be recovered from \eqref{E:ChDunits}.
\vskip 2mm
\noindent
(c)
The middle chordal decomposition of Figure~\ref{F:HexDcomp} and \eqref{E:ChDunits} may be used to illustrate the dihedral group action on chordal decompositions discussed in \S\ref{SSS:SkGrChrd}. The dihedral group $D_6$ is generated by the \emph{rotation} $\rho=(1\ 2\ 3\ 4\ 5\ 6)$, addition of $1$ modulo $6$, and the \emph{reflection} (German: Spiegelung) $\sigma=(1\ 5)(2\ 4)$, negation modulo $6$. Using the notation of (b), the respective rotation and reflection actions such as
\begin{align*}
&
\mathds 1_{13}\circ\mathds 1_{36}\circ\mathds 1_{46}
\overset{\rho}{\mapsto}
\mathds 1_{24}\circ\mathds 1_{41}\circ\mathds 1_{51}
=
\mathds 1_{14}\circ\mathds 1_{15}\circ\mathds 1_{24}
\
\mbox{ and}
\\
&
\mathds 1_{13}\circ\mathds 1_{36}\circ\mathds 1_{46}
\overset{\sigma}{\mapsto}
\mathds 1_{53}\circ\mathds 1_{36}\circ\mathds 1_{26}
=
\mathds 1_{26}\circ\mathds 1_{35}\circ\mathds 1_{36}
\end{align*}
then suffice to specify the right $D_6$ action on the complete set of all $6$ chordal decompositions with CDS $1^22^2$.
\end{remark}

Note that the table entries $\braket{j|jk}$ and $\braket{k|jk}$ are zero. For a geometric vertex $\mathbf v_i$ not lying on the line $\mathcal L_{jk}$ from $\mathbf v_j$ to $\mathbf v_k$, imagine following that line in the direction of the arrow, as described in Lemma~\ref{L:ArealFun}. Then if the vertex $\mathbf v_i$ lies to the left, the table entry for $\braket{i|jk}$ is $+$. If the vertex is on the right, the table entry is $-$. For example, consider the vertex $\mathbf v_4$ in the middle hexagon of Figure~\ref{F:HexDcomp}. Passing from $\mathbf v_1$ to $\mathbf v_3$, it lies on the left, while passing from $\mathbf v_3$ to $\mathbf v_6$, it lies on the right. Finally, it lies on the line from $\mathbf v_4$ to $\mathbf v_6$. This accounts for the consecutive entries $+,-,0$ in the middle part of the table row that is labelled by $\bra 4$.

\renewcommand{\arraystretch}{1}

\subsection{Region identification and orientation}\label{SSRegnIdOr}

A chordal decomposition $\delta$ of the polygon $\Pi$ (non-trivially for $n>3$, with $n=3$ as a base case) decomposes it as a union of $n-2$ triangular regions $\tau$. These triangular regions intersect with their neighbors along internally bounding chords. Let $T_\delta$ denote the set of $n-2$ triangles $\tau$ appearing in $\delta$. First, the triangular regions $\tau$ must be identified. Then, for each triangle $\tau$ in $T_\delta$, its vertex set $\set{\mathbf v_{i_1},\mathbf v_{i_2},\mathbf v_{i_3}}$ must be arranged in cyclic counterclockwise order $\mathbf v_{i_1}\wedge \mathbf v_{i_2}\wedge \mathbf v_{i_3}$ to facilitate the application of Proposition~\ref{P:ArCordFn}.

\begin{remark}
Our use of exterior algebra notation, here $\mathbf v_{i_1}\wedge \mathbf v_{i_2}\wedge \mathbf v_{i_3}$ for the oriented simplex $\tau$, or indeed $j\wedge k$ for an oriented line segment in Definition~\ref{D:ArealFun}, follows \cite[\S IV.4]{Chevalley1}, as reprinted as \cite[p.62]{Chevalley2}.
\end{remark}

We outline a recursive procedure for region identification and orientation, including the location of a given point within a region. The procedure is ultimately founded on the correspondence between chordal decompositions of polygons and rooted binary trees.

\subsubsection{Chordal decompositions and non-coassociative coproduct parsing trees}\label{SSS:ChDePaTr}

While rooted binary trees that correspond to chordal decompositions are usually considered to be parsing trees of repeated non-associative products, it is actually more appropriate to interpret them as parsing trees of repeated non-coassociative coproducts $\Delta\colon x\mapsto x^L\otimes x^R$:
\begin{equation}\label{E:coprodct}
\xymatrix{
&
\mathbf v_{i_3}
\ar
_-{x^L}[ddl]
\\
\\
\mathbf v_{i_1}
\ar
_-{x}[rr]
&
&
\mathbf v_{i_2}
\ar
_-{x^R}[uul]
}
\end{equation}
Here, we are using the non-coassociative Sweedler notation of \cite[Remark 2.2(b)]{Sm171}. For a concrete interpretation, consider the coproduct as acting on a probability distribution $x$ on the interior segment from $\mathbf v_{i_1}$ to $\mathbf v_{i_2}$ in \eqref{E:coprodct}. By some specified transport mechanism, it then produces a probability distribution $x^L$ on the interior segment from $\mathbf v_{i_3}$ to $\mathbf v_{i_1}$, and a probability distribution $x^R$ on the interior segment from $\mathbf v_{i_2}$ to $\mathbf v_{i_3}$. We denote the oriented segment $\mathbf v_{i_1}\wedge \mathbf v_{i_2}$ as the \emph{base} or \emph{input}, the oriented segment $\mathbf v_{i_2}\wedge \mathbf v_{i_3}$ as the \emph{right output}, and the oriented segment $\mathbf v_{i_3}\wedge \mathbf v_{i_1}$ as the \emph{left output}.

\subsubsection{The recursive step}

The input to the recursive step is a triple $(P,W,\chi)$ consisting of a polygon $P$ spanned by a vertex set $W=\set{\mathbf w_1<\dots<\mathbf w_r}$ that supports a standard ordering, and a set $\chi$ of standardly ordered chords that decompose $P$. The set $\chi$ will be empty if $r<4$.

The recursive step outputs a pair $(P^L,W^L,\chi^L)$,  $(P^R,W^R,\chi^R)$ of such triples. Either one of these output triples may be \emph{terminal}, in the sense that its vertex set contains only two elements. The input triple is not allowed to be terminal.

Given the input triple $(P,W,\chi)$, the \emph{base} is the oriented boundary edge $\mathbf w_1\wedge \mathbf w_2$, or just $1\wedge 2$ in the combinatorial notation. If $r=3$, then $1\wedge 2\wedge 3$ is the only oriented region $\tau$, and the procedure halts, since both of its output triples are terminal: the left $W^L=\set{3<1}$ and the right $W^R=\set{2<3}$. We variously identify
$$
\tau(P,W,\chi)=\tau_{1\wedge2}=1\wedge2\wedge3
$$
as the triangular region \emph{selected} by $(P,W,\chi)$.

Otherwise, if $r>3$, the triangular region $\tau$ including the edge $1\wedge 2$ takes the oriented form $1\wedge 2\wedge d$ for some vertex $d$. Again, we identify
\begin{equation}\label{E:regselct}
\tau(P,W,\chi)=\tau_{1\wedge2}=1\wedge2\wedge d
\end{equation}
as the triangular region \emph{selected} by $(P,W,\chi)$. With geometric vertex labels, the orientations are illustrated by
\begin{equation}\label{E:ltrtpoly}
\xymatrix{
&
\mathbf w_{d-1}
\ar@/_1pc/@{-->}[dl]
\ar@{}[d]_{P^L}
&
\mathbf w_d
\ar[l]
&
\mathbf w_d
\ar[dl]
\ar@{}@<1ex>[d]_-{\tau}
&
\mathbf w_d
\ar[dr]
&
\mathbf w_{d-1}
\ar[l]
\ar@{}[d]^{P^R}
\\
\mathbf w_r
\ar[r]
&
\mathbf w_1
\ar[ur]
&
\mathbf w_1
\ar[rr]
&
&
\mathbf w_2
\ar[ul]
&
\mathbf w_2
\ar[r]
&
\mathbf w_3\,.
\ar@/_1pc/@{-->}[ul]
}
\end{equation}

The vertex $d$ is determined from the chordal decomposition $\chi$ as follows:
\begin{enumerate}
\item[$(\mathrm a)$]
In this first case, both $1\wedge d$ and $2\wedge d$ are chords, so $\boxed{3<d<r}$.
\item[$(\mathrm b^L)$]
If $2\wedge d$ is a chord, but $1\wedge d$ is not, then $\boxed{d=r}$. The vertex set $\set{1,r}$ of $1\wedge r$ appears as the oriented boundary edge $r\wedge 1$;
\item[$(\mathrm b^R)$]
If $1\wedge d$ is a chord, but $2\wedge d$ is not, then $2\wedge d$ is the boundary edge $2\wedge 3$. In particular, $\boxed{d=3}$.
\end{enumerate}

The case designations $\mathrm b^L$ or $\mathrm b^R$ respectively signify that the carrier of $x^L$ or $x^R$ in the coproduct is a boundary edge. The recursion step behaves somewhat differently in each of the three cases:
\begin{enumerate}
\item[$(\mathrm a)$]
In this first case, $d\wedge 2$ assumes the role of the base boundary edge for the subpolygon $P^R$ with vertex set $W^R=\set{d<2<3<\dots<d-1}$. Here $\chi^R=\chi\cap\,\mathrm{int}\left(P^R\right)$. Similarly, the oriented edge $1\wedge d$ assumes the role of the base boundary edge for the subpolygon $P^L$ with vertex set $W^L=\set{1<d<d+1<\dots<r}$, and $\chi^L=\chi\cap\,\mathrm{int}\left(P^L\right)$.
\item[$(\mathrm b^L)$]
Here, $r\wedge 1$ plays no further role, corresponding to a leaf in the parsing tree. Formally, $W^L=\set{r<1}$ is terminal. On the other hand, $r\wedge 2$ now assumes the role of the base boundary edge for the subpolygon $P^R$ with vertex set $W^R=\set{r<2<\dots<r-1}$, and $\chi^R=\chi\cap\,\mathrm{int}\left(P^R\right)$.
\item[$(\mathrm b^R)$]
Here, $W^R=\set{2<3}$ is terminal. Now, $1\wedge 3$ assumes the role of the base boundary edge for the subpolygon $P^L$ with vertex set $W^L=\set{1<3<\dots<r}$, and $\chi^L=\chi\cap\,\mathrm{int}\left(P^L\right)$.
\end{enumerate}

The recursive step identifies the diagram
\begin{equation}\label{E:rechordt}
\xymatrix{
&
&
P
\\
P^L
\ar[urr]
&
1\wedge d
\ar[l]
\ar@{~>}[r]
&
\tau(P,W,\chi)
\ar[u]
&
2\wedge d
\ar[l]
\ar@{~>}[r]
&
P^R
\ar[ull]
\\
\ge0
&
0
&
\le0
&
\le0
&
\le0
&
\ket{1\wedge d}
\\
\ge0
&
\ge0
&
\ge0
&
0
&
\le0
&
\ket{2\wedge d}
}
\end{equation}
whose upper part exhibits unoriented subset relationships. In addition, the upper part may be interpreted to give oriented subset relationships, with the convention that wavy arrows reverse the orientation during the inclusion. The orientations correspond to those depicted in \eqref{E:ltrtpoly}. In case (a), all the depicted inclusions are proper. In case $(\mathrm b^L)$, the inclusion of $1\wedge d$ in $P^L$ is improper. In case $(\mathrm b^R)$, the orientation-reversing inclusion of $2\wedge d$ in $P^R$ is improper.

The bottom rows of \eqref{E:rechordt} give a fragment of an extended vertex/chord table. The columns are labeled by the various sets $S$ in the second row of the diagram. For an element $\mathbf x$ of $S$, the entry in the row labeled by $\ket{i\wedge d}$ gives sign information for $\braket{\mathbf x|i\wedge d}$. Furthermore, the listed behavior of $\mathbf x\in S=\tau(P,W,\chi)=\tau(\Pi^\Lambda,V^\Lambda,\delta^\Lambda)$ applies equally to the behavior of any element $\mathbf y$ of $\tau(\Pi^{\Lambda_I},V^{\Lambda_I},\delta^{\Lambda_I})$ for any initial segment $\Lambda_I$ of $\Lambda$, since those elements of $\Pi$ appear in the convex set $\Pi\smallsetminus(P^L\cup P^R)$ which includes $\mathbf w_1\wedge \mathbf w_2$ as an edge or chord.

The role of the bottom rows of \eqref{E:rechordt} in the location of a polygon element $\mathbf x$ is illustrated by the example of \S\ref{SSS:hexample}.

\subsubsection{The recursive procedure}

The recursive step is applied $n-2$ times in a complete determination of the regions in the chordal decomposition. At the initial step, we take $(P,W,\chi)$ as $(\Pi,V,\delta)$, with base $1\wedge 2$, selecting the triangular region $\tau^{\varnothing}=\tau(\Pi,V,\delta)$. Subsequent recursive steps work with
$$
(P,W,\chi)=(\Pi^\Lambda,V^\Lambda,\delta^\Lambda)\,,
$$
where $\Lambda$ is a word in the alphabet $\set{L,R}$. Indeed, we may also write $(\Pi^{\varnothing},V^{\varnothing},\delta^{\varnothing})$ for the initial triple $(\Pi,V,\delta)$, using the empty word $\varnothing$ in the alphabet $\set{L,R}$. The triangular region selected from $(\Pi^\Lambda,V^\Lambda,\delta^\Lambda)$ may be denoted by $\tau^\Lambda$, in addition to the notations of \eqref{E:regselct}. These conventions, and further fine details of the recursive procedure, are adequately illustrated on the basis of the following example.

\subsubsection{An example from the hexagon}\label{SSS:hexample}

In our hexagonal example with CDS $2^3$, the coproduct parsing tree may be displayed as follows:
\begin{equation}\label{23PrsTre}
\xymatrix{
&
&
1\wedge 2
\ar@{-}[d]
\\
&
&
\tau^{\varnothing}=\tau_{1\wedge 2}
\ar@{-}[dl]_-{L}
\ar@{-}[dr]^-{R}
\\
&
\tau^{L}=\tau_{1\wedge 3}
\ar@{-}[dl]_-{L}
\ar@{-}[dr]^-{R}
\ar@{}[rr]^{1\wedge2\wedge3}
&
&
2\wedge 3\phantom{\,.}
\\
\tau^{LL}=\tau_{1\wedge 5}
\ar@{-}[d]_-{L}
\ar@{-}[dr]^-{R}
\ar@{}[rr]^{1\wedge3\wedge5}
&
&
\tau^{LR}=\tau_{5\wedge 3}
\ar@{-}[d]_-{L}
\ar@{-}[dr]^-{R}
\\
6\wedge 1
\ar@{}[r]^{1\wedge5\wedge6\rule{4mm}{0mm}}
&
5\wedge 6
&
4\wedge 5
\ar@{}[r]^{3\wedge4\wedge5\rule{4mm}{0mm}}
&
3\wedge 4\,.
}
\end{equation}
The initial base $1\wedge 2$, a boundary edge, is presented as an initial leaf. The remaining $5$ boundary edges appear as the remaining leaves, marking respective terminations of branches of the recursive procedure. The initial triangle is labelled by the initial base, while the remaining internal nodes mark triangles labelled by (possibly reoriented) chords. The reorientations are determined by the wavy arrows in the second row of the diagram \eqref{E:rechordt}. The standard orderings of the triangles are also placed within the branchings of the tree, showing how they comprise the three surrounding edges and chords from the branching.

At the first internal node $\tau^{\varnothing}$ of the tree, with $(P,W,\chi)=(\Pi,V,\delta)$, the middle rows of \eqref{E:rechordt} take the form
\begin{equation}\label{E:onewedge}
\scalebox{.96}{
\xymatrix{
\Pi^L
&
1\wedge 3
\ar[l]
\ar@{~>}[r]
&
\tau(\Pi,V,\delta)
&
2\wedge 3
\ar[l]
\ar@{~>}[r]
&
\Pi^R
\\
\ge0
&
0
&
\le0
&
\le0
&
\le0
&
\ket{1\wedge 3}.
}
}
\end{equation}
The final row is not recorded here, since it displays an external boundary function which always returns nonnegative values at any element $\mathbf x$ of $\Pi$. At the second internal node $\tau^L$ of the tree, with $W=\set{1<3<4<5<6}$ and $\chi=\set{1\wedge 5,3\wedge5}$, the lower rows of \eqref{E:rechordt} take the form
\begin{equation}\label{E:twowedge}
\scalebox{.96}{
\xymatrix{
\Pi^{LL}
&
1\wedge 5
\ar[l]
\ar@{~>}[r]
&
\tau(\Pi^L,W,\chi)
&
3\wedge 5
\ar[l]
\ar@{~>}[r]
&
\Pi^{LR}
\\
\ge0
&
0
&
\le0
&
\le0
&
\le0
&
\ket{1\wedge 5}\phantom{.}
\\
\ge0
&
\ge0
&
\ge0
&
0
&
\le0
&
\ket{3\wedge 5}.
}
}
\end{equation}

The information from the lower rows of \eqref{E:onewedge} and \eqref{E:twowedge} is collated in the columns of the \emph{triangle/chord table}, displayed as Table~\ref{T:VerChTab}.

\renewcommand{\arraystretch}{1.5}
\begin{table}[hbt]
\begin{tabular}{||r||c|c|c||c||}
\hline
Chord: &$\ket{1\wedge3}$ &$\ket{1\wedge5}$ &$\ket{3\wedge5}$ &Code\\
\hline
$\tau^{\varnothing}$ &$\le0$ &$\le0$ &$\ge0$ &001\\
\hline
$\Pi^L\supset\tau^{L}$ &$\ge0$ &$\le0$ &$\ge0$ &101\\
\hline
$\Pi^L\supset\Pi^{LL}=\tau^{LL}$ &$\ge0$ &$\ge0$ &$\ge0$ &111\\
\hline
$\Pi^L\supset\Pi^{LR}=\tau^{LR}$ &$\ge0$ &$\le0$ &$\le0$ &100\\
\hline
\end{tabular}
\vskip 4mm
\caption{The \emph{triangle/chord table}, which is structured like the vertex/chord Table~\ref{T:vertchrd}. Its initial columns incorporate the information from the lower rows of \eqref{E:onewedge} and \eqref{E:twowedge}. Then, the binary coding in the final column records $0$ for a non-positive chordal areal function value, and $1$ for a non-negative chordal areal function value. Each triangle has a distinct coding, enabling location of any polygon point $\mathbf x$ within its triangular region(s).}
\label{T:VerChTab}
\end{table}

\begin{remark}\label{R:Hodgdual}
The parsing tree displayed in \eqref{23PrsTre} is topologically dual to the triangularly partitioned hexagon, as shown by the following diagram.
\begin{equation}\label{E:Hodgdual}
\xymatrix{
&
&
&
&
&
&
\ar@{--}[dd]
\\
&
&
&
&
\mathbf v_3
\ar[dddlll]
\ar[dddddddlll]
&
&
&
\mathbf v_2
\ar[lll]
\\
&
&
\ar@{--}[dd]
&
&
&
&
\circ
\ar@{==}[rr]
&
&
\\
\\
&
\mathbf v_4
\ar[dddd]
&
\circ
\ar@{--}[rr]
&
&
\circ
\ar@{--}[uurr]
&
&
&
\mathbf v_1
\ar[uuu]
\ar[uuulll]
\ar[ddddllllll]
\\
\\
\ar@{--}[uurr]
\\
&
&
&
&
\circ
\ar@{--}[uuu]
\ar@{--}[rrr]
\ar@{--}[dd]
&
&
&
\\
&
\mathbf v_5
\ar[rrrr]
&
&
&
&
\mathbf v_6
\ar[uuuurr]
\\
&
&
&
&
}
\end{equation}
The open circles represent $0$-dimensional points dual to the $2$-dimensional triangles, while the dashed lines represent $1$-dimensional edges that are dual to the $1$-dimensional chords and hexagon edges. The double dashed edge, emerging from the root of the parsing tree, is dual to the base edge $1\wedge 2$ of the hexagon.

Some diagrams similar to \eqref{E:Hodgdual} were exhibited in an abstract context in \cite[Fig.~4.15]{ConwayGuy}. Now, it is interesting to note how the topological duality of the diagrams tracks the duality between coalgebras and algebras.
\end{remark}

\subsubsection{Some sample computations}\label{SSS:SoSamCom}

In the Euclidean plane $\mathbb R^2$,  consider the hexagon $\Pi$ spanned by the vertices
$$
\mathbf v_1 = (2,1),\ \mathbf v_2 = (2,2),\ \mathbf v_3 = (1,2),\
\mathbf v_4 = (0,1),\ \mathbf v_5 = (0,0),\ \mathbf v_6 = (1,0)
$$
with the chordal decomposition $\delta=\mathds 1_{13}\circ\mathds 1_{15}\circ\mathds 1_{35}$ of type CDS $2^3$ from \eqref{E:ChDunits}. We determine the chordal coordinates of the three points
$$
\mathbf a = (7/4,3/2),\ \mathbf b = (3/2, 3/2),\ \mathbf c = (1,1)
$$
of $\Pi$ using the parsing tree \eqref{23PrsTre} and the vertex/chord Table~\ref{T:VerChTab}.

The first task is to locate the points within the triangles delineated by the decomposition $\delta$. For $\mathbf x=(x_1,x_2)$, we have
\begin{align*}
2\braket{\mathbf x|1\wedge3}=
\begin{vmatrix}
1 &x_1 &x_2\\
1 &2 &1\\
1 &1 &2
\end{vmatrix},
2\braket{\mathbf x|1\wedge5}=
\begin{vmatrix}
1 &x_1 &x_2\\
1 &2 &1\\
1 &0 &0
\end{vmatrix},
2\braket{\mathbf x|3\wedge5}=
\begin{vmatrix}
1 &x_1 &x_2\\
1 &1 &2\\
1 &0 &0
\end{vmatrix},
\end{align*}
yielding the respective codes from the vertex/chord Table~\ref{T:VerChTab} displayed in Table~\ref{T:abcVChTb}. Thus, we have $\mathbf a\in\tau^{\varnothing}$, $\mathbf b\in\tau^{\varnothing}\cap\tau^L$, and $\mathbf c\in\tau^L$.

\renewcommand{\arraystretch}{1.5}
\begin{table}[htb]
\begin{tabular}{||c||c|c|c||c||}
\hline
Chord: &$\ket{1\wedge3}$ &$\ket{1\wedge5}$ &$\ket{3\wedge5}$ &Code\\
\hline
$\mathbf a$ &$\le0$ &$\le0$ &$\ge0$ &001\\
\hline
$\mathbf b$ &$0$ &$\le0$ &$\ge0$ &001,101\\
\hline
$\mathbf c$ &$\ge0$ &$\le0$ &$\ge0$ &101\\
\hline
\end{tabular}
\vskip 4mm
\caption{The codes from the vertex/chord Table~\ref{T:VerChTab} for the vertices $\mathbf a,\mathbf b$ and $\mathbf c$.
}
\label{T:abcVChTb}
\end{table}

Now, we merely follow \eqref{E:Aov0v1v2} and \eqref{E:ArCordFn} to find the areal coordinates of $\mathbf a$ in the simplex $\tau^{\varnothing}$, of $\mathbf b$ in the simplex $\tau^{\varnothing}$, say, and of $\mathbf c$ in the simplex $\tau^L$. This procedure gives the coordinate expressions
\begin{align*}
\mathbf a &= 1/2 \, \mathbf v_1 + 1/4 \, \mathbf v_2 + 1/4 \, \mathbf v_3\,,\\
\mathbf b &= 1/2 \, \mathbf v_1 + 1/2 \, \mathbf v_3\,,\quad \mbox{and}\\
\mathbf c &= 1/3 \, \mathbf v_1 + 1/3 \, \mathbf v_3 + 1/3 \, \mathbf v_5\,,
\end{align*}
the chordal coordinates for all the absent vertices being zero in each case.

\subsection{Chordal coordinates of a polygon}\label{SSS:ChordFun}

Consider a polygon $\Pi$ with ordered vertex set $V=\set{\mathbf v_1<\dots<\mathbf v_n}$. Let $\delta$ be a chordal decomposition of $\Pi$. Suppose that the procedure outlined in \S\ref{SSRegnIdOr} has been executed, thus producing the set $T_\delta$ of $n-2$ correctly oriented triangles $\tau$ appearing in $\delta$. For each triangle $\tau\in T_\delta$, and for each vertex $\mathbf v$ of $\tau$, take the areal coordinate function given by \eqref{E:ArCordFn}.

\begin{definition}\label{D:ChordFun}
(a)
For $S\subset T_\delta$ and $\tau\in T_\delta\smallsetminus S$, define
$$
\bigodot_{\sigma\in\varnothing}\mathds 1_\sigma\cdot\ket{\mathbf v}_\sigma=0
$$
and
\begin{equation}\label{E:RcStOdot}
\bigodot_{\sigma\in S\cup\set\tau}\mathds 1_\sigma\cdot\ket{\mathbf v}_\sigma
=\bigodot_{\sigma\in S}\mathds 1_\sigma\cdot\ket{\mathbf v}_\sigma
+\mathds 1_\tau\cdot\ket{\mathbf v}_\tau
-\mathds 1_{\tau\cap\,\bigcup\set{\sigma|\sigma\in S}}\cdot\ket{\mathbf v}_\tau
\end{equation}
recursively, using the pointwise product in the function algebra $I^\Pi$.
\vskip 2mm
\noindent
(b)
Using the notation of (a), the formula
\begin{equation}\label{E:ChordFun}
\ket{\mathbf v}_\delta
=\bigodot_{\sigma\in\set{\tau\in T_\delta|\mathbf v\in\tau}}\mathds 1_\sigma\cdot\ket{\mathbf v}_\sigma
\end{equation}
gives the \emph{chordal coordinate function} of $\delta$ at a vertex $\mathbf v$ of $\Pi$.
\end{definition}

In connection with the application of the recursion step \eqref{E:RcStOdot} to an element $\mathbf a$ of $\Pi$, we have the following:

\begin{lemma}\label{L:RcStOdot}
Consider the equality
\begin{equation}\label{E:RcTaOdot}
\bra{\mathbf a}\bigodot_{\sigma\in S\cup\set\tau}\mathds 1_\sigma\cdot\ket{\mathbf v}_\sigma
=
\bra{\mathbf a}\bigodot_{\sigma\in S}\mathds 1_\sigma\cdot\ket{\mathbf v}_\sigma
\end{equation}
for $S\subset T_\delta$, $\tau\in T_\delta$, $\mathbf a\in\Pi$, and $\mathbf v\in V$.
\begin{enumerate}
\item[$(a)$]
If $\mathbf a\notin\tau$, then \eqref{E:RcTaOdot} holds.
\item[$(b)$]
If $\braket{\mathbf a|\mathbf v}_\tau=0$, then \eqref{E:RcTaOdot} holds.
\item[$(c)$]
If $\mathbf a$ lies in all $\rho\in S\cup\set\tau$, then \eqref{E:RcTaOdot} holds.
\end{enumerate}
\end{lemma}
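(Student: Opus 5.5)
The plan is to evaluate the recursion \eqref{E:RcStOdot} at $\mathbf a$ and show that, under each hypothesis, the two $\tau$-terms cancel. Evaluating \eqref{E:RcStOdot} at $\bra{\mathbf a}$ with the pointwise operations of $I^\Pi$ (or $\mathbb R^\Pi$) gives
\begin{equation*}
\bra{\mathbf a}\bigodot_{\sigma\in S\cup\set\tau}\mathds 1_\sigma\cdot\ket{\mathbf v}_\sigma
-\bra{\mathbf a}\bigodot_{\sigma\in S}\mathds 1_\sigma\cdot\ket{\mathbf v}_\sigma
=\big(\braket{\mathbf a|\mathds 1_\tau}-\braket{\mathbf a|\mathds 1_{\tau\cap U}}\big)\braket{\mathbf a|\mathbf v}_\tau ,
\end{equation*}
where $U=\bigcup\set{\sigma|\sigma\in S}$. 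So \eqref{E:RcTaOdot} holds exactly when the right-hand side vanishes. I will implicitly take $\tau\notin S$, as in Definition~\ref{D:ChordFun}(a); if $\tau\in S$, then $S\cup\set\tau=S$ and there is nothing to prove.

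Each part then reduces to killing one factor of this product.

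For (a), if $\mathbf a\notin\tau$, then also $\mathbf a\notin\tau\cap U$. Hence both indicator values are $0$ and the difference vanishes. (The value $\braket{\mathbf a|\mathbf v}_\tau$ is still a well-defined real number given by \eqref{E:ArCordFn}, even for $\mathbf a$ outside $\tau$, so the product really is $0$.)

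For (b), the second factor $\braket{\mathbf a|\mathbf v}_\tau$ is zero, so the product is zero.

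For (c), $\mathbf a$ lies in $\tau$ and in every $\sigma\in S$. The point needing a little care is the degenerate case $S=\varnothing$, where $U=\varnothing$. In that case, though, the left-hand side of \eqref{E:RcTaOdot} equals $\braket{\mathbf a|\mathbf v}_\tau$ while the right-hand side is $0$, so the claim fails unless $\braket{\mathbf a|\mathbf v}_\tau=0$. I would therefore read (c) with $S$ nonempty, as the situation in which it will be applied. For nonempty $S$, $\mathbf a\in U$ and $\mathbf a\in\tau$, so $\mathbf a\in\tau\cap U$. Then $\braket{\mathbf a|\mathds 1_\tau}=\braket{\mathbf a|\mathds 1_{\tau\cap U}}=1$ and the bracket vanishes.

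There is no real obstacle here. The only points needing attention are the $S=\varnothing$ edge case in (c), and the remark that \eqref{E:RcStOdot} is read in $\mathbb R^\Pi$, since intermediate differences need not lie in $I$. Linearity of evaluation at $\bra{\mathbf a}$ for sums and products of functions is immediate from the pointwise definitions.
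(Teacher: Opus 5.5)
Your proof is correct and is essentially the paper's own argument: you expand the recursion \eqref{E:RcStOdot} at $\mathbf a$ and observe that either the indicator difference $\braket{\mathbf a|\mathds 1_\tau}-\braket{\mathbf a|\mathds 1_{\tau\cap U}}$ or the factor $\braket{\mathbf a|\mathbf v}_\tau$ vanishes. Your caveat on (c) is also right: for $S=\varnothing$ the paper's claim that $\mathbf a$ lies in $\tau\cap\bigcup\set{\sigma|\sigma\in S}$ fails, so (c) genuinely needs $S\ne\varnothing$. This does no harm, since the paper only invokes (c) in the inductive step of the vertex case of Theorem~\ref{T:ChrdCord}, where $S=\set{\tau_1,\dots,\tau_k}$ with $k\ge1$.
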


\begin{proof}
By \eqref{E:RcStOdot}, we have
\begin{align}\notag
\bra{\mathbf a}\bigodot_{\sigma\in S\cup\set\tau}\mathds 1_\sigma\cdot\ket{\mathbf v}_\sigma
=
\bra{\mathbf a}\bigodot_{\sigma\in S}&\mathds 1_\sigma\cdot\ket{\mathbf v}_\sigma
\\ \label{E:ExRecTrm}
&
+\bra{\mathbf a}\mathds 1_\tau\cdot\braket{\mathbf a|\mathbf v}_\tau
-\bra{\mathbf a}\mathds 1_{\tau\cap\,\bigcup\set{\sigma|\sigma\in S}}\cdot
\braket{\mathbf a|\mathbf v}_\tau\,.
\end{align}

\vskip 2mm
\noindent
(a)
The difference \eqref{E:ExRecTrm} vanishes in this case since $\bra{\mathbf a}\mathds1_\tau$ and $\bra{\mathbf a}\mathds1_{\tau\cap\,\bigcup\set{\sigma|\sigma\in S}}$ are zero.
\vskip 2mm
\noindent
(b)
The difference \eqref{E:ExRecTrm} vanishes in this case since $\braket{\mathbf a|\mathbf v}_\tau=0$.
\vskip 2mm
\noindent
(c)
The difference \eqref{E:ExRecTrm} vanishes in this case since $\bra{\mathbf a}\mathds 1_{\tau\cap\,\bigcup\set{\sigma|\sigma\in S}}=1$ and $\bra{\mathbf a}\mathds 1_\tau=1$.
\end{proof}

\begin{theorem}\label{T:ChrdCord}
Let $\delta$ be a chordal decomposition of $\Pi$.
\begin{enumerate}
\item[$(\mathrm a)$]
The partition of unity property
$$
\sum_{\mathbf{v}\in V}\ket{\mathbf v}_\delta=\mathds 1_\Pi
$$
holds.
\item[$(\mathrm b)$]
With $\mathbf a\in\Pi$, the barycentric or linear precision property
\begin{equation}\label{E:LnPrChrd}
\bra{\mathbf a}=\sum_{\mathbf v\in V}\braket{\mathbf a|\mathbf v}_\delta\bra{\mathbf v}
\end{equation}
holds.
\end{enumerate}
\end{theorem}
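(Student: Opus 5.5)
The plan is to prove (b) directly and then deduce (a) from it via Lemma~\ref{L:BPimpPO1} (compare Remark~\ref{R:PolCoSys}(a)). Part (a) can also be checked by hand in the same way as (b).

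Fix $\mathbf a\in\Pi$. The triangles of $T_\delta$ cover $\Pi$, so we can choose some $\tau_0\in T_\delta$ with $\mathbf a\in\tau_0$. The main step is to show that, for every vertex $\mathbf v\in V$,
$$
\braket{\mathbf a|\mathbf v}_\delta=\braket{\mathbf a|\mathbf v}_{\tau_0}\quad\mbox{if }\mathbf v\in\tau_0,\qquad
\braket{\mathbf a|\mathbf v}_\delta=0\quad\mbox{otherwise.}
$$
Granting this, (b) follows from the barycentric property of areal coordinates within the single triangle $\tau_0$, namely Proposition~\ref{P:ArCordFn}:
$$
\bra{\mathbf a}=\sum_{\mathbf v\in\tau_0}\braket{\mathbf a|\mathbf v}_{\tau_0}\bra{\mathbf v}.
$$

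To establish the displayed claim, expand the recursive $\bigodot$ of Definition~\ref{D:ChordFun} over the set $\{\tau\in T_\delta\mid \mathbf v\in\tau\}$. Adding the triangles one at a time, we track the value at $\mathbf a$ using Lemma~\ref{L:RcStOdot}.

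The key geometric input concerns the triangles $\sigma$ that contain $\mathbf a$. Since the regions of a non-crossing triangulation meet only along common edges or vertices, all such $\sigma$ intersect in a common face $F$ containing $\mathbf a$, and $F$ is either $\tau_0$ itself, a chord, or a vertex.

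From this, two consequences follow. First, if $\mathbf v$ is a vertex of $\sigma$ but not of $F$, then $\braket{\mathbf a|\mathbf v}_\sigma=0$, because $\mathbf a$ lies on the face of $\sigma$ opposite $\mathbf v$. Second, if $\mathbf v\in F$, then the areal coordinates $\braket{\mathbf a|\mathbf v}_\sigma$ agree for all such $\sigma$, since they coincide with the barycentric coordinates of $\mathbf a$ in $F$. This uses the standard fact that areal coordinates restricted to an edge reduce to the linear ratio along that edge.

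The recursion is then run as follows. Triangles not containing $\mathbf a$ contribute nothing, by Lemma~\ref{L:RcStOdot}(a). Triangles containing $\mathbf a$ but with $\braket{\mathbf a|\mathbf v}_\sigma=0$ contribute nothing, by Lemma~\ref{L:RcStOdot}(b). Among the remaining triangles, the first one added contributes its common value $c=\braket{\mathbf a|\mathbf v}_{\tau_0}$. Each later one $\tau$ adds $c-\bra{\mathbf a}\mathds 1_{\tau\cap\bigcup S}\,c$. Here $\mathbf a$ lies in $\tau\cap\bigcup S$, because $\mathbf a$ belongs to the earlier contributing triangle, so the net contribution is $0$.

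I expect the main obstacle to be exactly this bookkeeping. Lemma~\ref{L:RcStOdot}(c) as stated needs $\mathbf a$ to lie in all of $S\cup\{\tau\}$, which is stronger than what the recursion provides. I would bypass it by working directly with \eqref{E:ExRecTrm}, and I would order the triangles so that one containing $\mathbf a$ and $\mathbf v$ is processed first. This ordering must not affect the result. Since the ordering in Definition~\ref{D:ChordFun} is not fixed, I would note that the pointwise value computed above is independent of the ordering.

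The other delicate point is the consistency of areal coordinates on shared chords. It follows from Proposition~\ref{P:Aov0v1v2}: the function $\ket{\mathbf v}_\sigma$ is affine, and on the chord opposite a vertex it vanishes at that vertex's complement, so on a shared chord it coincides with the one-dimensional barycentric coordinate.
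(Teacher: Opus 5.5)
Your proof is correct. It keeps the paper's outer frame: prove (b), obtain (a) from Lemma~\ref{L:BPimpPO1}, and evaluate the recursion \eqref{E:RcStOdot} pointwise using Lemma~\ref{L:RcStOdot}. The core is organized differently, however.

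The paper splits into four cases according to where $\mathbf a$ lies: the relative interior of a region, of a boundary edge, or of a chord, or a vertex. It computes the recursion by hand in each case. Lemma~\ref{L:RcStOdot}(a) leaves a single term in the first two cases. An explicit two-term computation, with $\tau_1$ added before $\tau_2$, handles chords. An induction using Lemma~\ref{L:RcStOdot}(c) handles vertices.

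You replace the case split with one structural observation. The regions containing $\mathbf a$ meet in a common face $F$, and the areal coordinates of $\mathbf a$ in each of them are its barycentric coordinates in $F$, extended by zero. So every region containing both $\mathbf a$ and $\mathbf v$ gives the same value $\braket{\mathbf a|\mathbf v}_\sigma$. The recursion evaluates at $\mathbf a$ to $\braket{\mathbf a|\mathbf v}_{\sigma^*}$, where $\sigma^*$ is the first listed region containing $\mathbf a$, and to $0$ if there is none.

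This settles all locations at once. It also shows that $\ket{\mathbf v}_\delta$ does not depend on the unspecified order in Definition~\ref{D:ChordFun}, even if different orders are used for different vertices. The paper simply fixes the order $\tau_1,\tau_2$ in its chord case, and your consistency fact is what makes that choice harmless. In exchange, the paper's version needs no simplicial-complex facts and stays closer to the pictures.

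Three small tidy-ups:
\begin{enumerate}
\item[(i)] Your sentence ``the first one added contributes its common value $c$'' is valid only because a region of value $0$ containing $\mathbf a$ and $\mathbf v$ cannot precede one of value $c\neq0$. Your dichotomy rules this out, but it is cleaner to state the ``first region containing $\mathbf a$'' formulation directly.
\item[(ii)] That formulation also makes your proposed reordering unnecessary, since the argument is already order-free.
\item[(iii)] In your last paragraph, the intended statement is this: $\ket{\mathbf v}_\sigma$ is affine, equals $1$ at $\mathbf v$ and $0$ at the other two vertices, vanishes on the opposite edge, and restricts to the one-dimensional coordinate on each edge through $\mathbf v$.
\end{enumerate}
You are right that Lemma~\ref{L:RcStOdot}(c) really needs only $\mathbf a\in\tau\cap\bigcup S$. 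As stated it would fail for $S=\varnothing$, but the paper invokes it only in its induction step, where $S\neq\varnothing$.
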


\begin{proof}
By Lemma~\ref{L:BPimpPO1}, it suffices to prove (b), since (a) then follows by application of the barycentric homomorphism $\mathds 1\colon\Pi\to I$ to both sides of (b). Consider an element $\mathbf a$ of $\Pi$. There are various cases to consider, according to the location of $\mathbf a$ in the chordal decomposition $\delta$. In the remainder of this proof, ``interior'' refers to relative interiors.

\vskip 2mm
\noindent
\textbf{Interiors of triangles:}
If $\mathbf a$ lies in the relative interior of the oriented triangle $\tau=\mathbf v_i\wedge \mathbf v_j\wedge \mathbf v_k$ of the chordal decomposition, then by \eqref{E:ChordFun} and Lemma~\ref{L:RcStOdot}(a),
\begin{equation}\label{E:UniquTri}
\braket{\mathbf a|\mathbf v}_\delta
=
\bigodot_{\sigma\in\set{\rho\in T_\delta|\mathbf v\in\rho}}\bra{\mathbf a}\mathds 1_\sigma\cdot\braket{\mathbf a|\mathbf v}_\sigma
=\braket{\mathbf a|\mathbf v}_\tau
\,,
\end{equation}
so $\braket{\mathbf a|\mathbf v}_\delta=0$ if $\mathbf v\notin\set{\mathbf v_i,\mathbf v_j,\mathbf v_k}$. In this case, Proposition~\ref{P:ArCordFn} yields \eqref{E:LnPrChrd} as an instance of the usual expression for areal coordinates in a triangle.

\vskip 2mm
\noindent
\textbf{Interiors of boundary edges:}
Suppose that $\mathbf a$ lies in the interior of a boundary edge $\mathbf v_i\wedge \mathbf v_{i+1}$.  The boundary edge forms part of a unique triangle $\tau=\mathbf v_i\wedge \mathbf v_{i+1}\wedge \mathbf v_j$ of the chordal decomposition. Then, as with \eqref{E:UniquTri}, we have $\braket{\mathbf a|\mathbf v}_\delta=0$ if $\mathbf v\notin\set{\mathbf v_i,\mathbf v_{i+1},\mathbf v_j}$, and  Proposition~\ref{P:ArCordFn} yields \eqref{E:LnPrChrd} as an instance of the usual expression for areal coordinates in a triangle.

\vskip 2mm
\noindent
\textbf{Interiors of chords:}
Suppose that $\mathbf a$ lies in the relative interior of the chord $\mathbf v_i\wedge \mathbf v_j$ separating the oriented triangles $\tau_1=\mathbf v_i\wedge \mathbf v_j\wedge \mathbf v_k$ and $\tau_2=\mathbf v_l\wedge \mathbf v_j\wedge \mathbf v_i$. Then by \eqref{E:ChordFun} and Lemma~\ref{L:RcStOdot}(a),
\begin{align*}
\braket{\mathbf a|\mathbf v}_\delta
&
=
\bigodot_{\sigma\in\set{\rho\in T_\delta|\mathbf v\in\rho}}\bra{\mathbf a}\mathds 1_\sigma\cdot\braket{\mathbf a|\mathbf v}_\sigma
\\
&
=
\bra{\mathbf a}\mathds1_{\tau_1}\cdot\braket{\mathbf a|\mathbf v}_{\tau_1}
+
\bra{\mathbf a}\mathds1_{\tau_2}\cdot\braket{\mathbf a|\mathbf v}_{\tau_2}
-
\bra{\mathbf a}\mathds1_{\tau_1\cap\tau_2}\cdot\braket{\mathbf a|\mathbf v}_{\tau_2}
\\
&
=
\braket{\mathbf a|\mathbf v}_{\tau_1}
+
\braket{\mathbf a|\mathbf v}_{\tau_2}
-
\braket{\mathbf a|\mathbf v}_{\tau_2}
=\braket{\mathbf a|\mathbf v}_{\tau_1}
\,,
\end{align*}
so $\braket{\mathbf a|\mathbf v}_\delta=0$ if $\mathbf v\notin\set{\mathbf v_i,\mathbf v_j}$. In this case, \eqref{E:LnPrChrd} reduces to an instance of the usual expression for areal coordinates on a line segment.

\vskip 2mm
\noindent
\textbf{Vertices:}
Suppose that $\mathbf a$ is a vertex $\mathbf v_i$ of $\Pi$. Now, for any triangle $\sigma$ of the chordal decomposition $\delta$, we have $\braket{\mathbf v_i|\mathbf v_j}_\sigma=\delta_{ij}$ with the Kronecker delta. By \eqref{E:ChordFun} and Lemma~\ref{L:RcStOdot}(b), we have $\braket{\mathbf v_i|\mathbf v_j}_\delta=0$ for $i\ne j$.

Now suppose that $\tau_1,\dots,\tau_r$ is the list of triangles within the chordal decomposition $\delta$ that contain $\mathbf v_i$. By Lemma~\ref{L:RcStOdot}(a), we have that
$$
\braket{\mathbf v_i|\mathbf v_i}_\delta=
\bra{\mathbf v_i}\bigodot_{\sigma\in\set{\tau_1,\dots,\tau_r}}\mathds 1_\sigma\cdot\ket{\mathbf v_i}_\sigma\,.
$$
We then have
$$
\bra{\mathbf v_i}\bigodot_{\sigma\in\set{\tau_1,\dots,\tau_k}}\mathds 1_\sigma\cdot\ket{\mathbf v_i}_\sigma=1
$$
for $1\le k\le r$ by induction, using Lemma~\ref{L:RcStOdot}(c). Thus \eqref{E:LnPrChrd} also holds in this final case.
\end{proof}

\begin{example}\label{X:ChordFun}
Let $\Pi$ be the convex hull (in $\mathbb{R}^2$) of the ordered sequence
$$
\mathbf v_1=(0,0)
=\tfrac23\mathbf v_2-\tfrac13\mathbf v_3+\tfrac23\mathbf v_4,
\mathbf v_2=(1,0),
\mathbf v_3=(0,1),
\mathbf v_4=(-1,\tfrac{1}{2})
$$
of extreme points. Let $\mathbf a$ be the point $\sum_{i=1}^4\tfrac14\mathbf v_i=(0,\frac38)$.
Consider the chordal decompositions $\delta=\set{1\wedge 3}$ and $\delta'=\set{2\wedge 4}$ of $\Pi$, with respective region sets
$$
T_{\delta}=\set{\tau_{1}=1\wedge2\wedge3,\tau_{2}=1\wedge3\wedge4}\phantom{\,.}
$$
and
$$
T_{\delta'}=\set{\tau_{1}'=1\wedge2\wedge4,\tau_{2}'=2\wedge3\wedge4}\,.
$$
Since $\mathbf a\in\tau_1\cap\tau_2$, its chordal coordinates with respect to $\delta$ are $\left(\frac58,0,\frac38,0\right)$. On the other hand, since $\mathbf a\in\tau_2'\smallsetminus\tau_1'$, its chordal coordinates with respect to $\delta'$ are $\left(0,\frac5{12},\frac2{12},\frac5{12}\right)$.
\end{example}

\section{Cartographic coordinates of a polygon}\label{SS:CartoCor}

For this section, consider a polygon $\Pi$ with an ordered set $V$ of $n$ vertices, as presented in \S\ref{SSS:CombGeom}. To remove any bias that might have resulted from the choice of any single chordal coordinate system, we average over the action of the dihedral group to yield a \emph{cartographic coordinate system}.

\subsection{Cartographic coordinates for a CDS}\label{SSS:CartoCDS}

\begin{definition}\label{D:CartoCDS}
Let $\delta$ be a chordal decomposition of $\Pi$ with a specified CDS. Then the formula
\begin{equation}\label{E:CartoCDS}
\ket{\mathbf v}_\kappa=
\frac1{2n}\sum_{g\in D_n}\ket{\mathbf v}_{\delta g}
\end{equation}
gives the \emph{cartographic coordinate function} of that CDS at a vertex $\mathbf v$ of $\Pi$.
\end{definition}

In Definition~\ref{D:CartoCDS}, the right action $g\colon\delta\mapsto\delta g$ of a dihedral group element $g$ appearing in \eqref{E:CartoCDS} on the chordal decomposition $\delta$ is as described in \S\ref{SSS:SkGrChrd} and Remark~\ref{R:VerChTab}(c).

\begin{theorem}\label{T:CartoCDS}
For each chordal decomposition $\delta$ of $\Pi$, Definition~\ref{D:CartoCDS} specifies a coordinate system $\kappa$ of $\Pi$.
\end{theorem}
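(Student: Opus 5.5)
The plan is to combine Theorem~\ref{T:ChrdCord} with the convexity result Theorem~\ref{T:PolCoSys}. First we check that each $\delta g$, for $g\in D_n$, is again a chordal decomposition of $\Pi$. The action of $g$ on $V$ is by graph automorphisms of the skeleton $C_n$, as in \S\ref{SSS:SkGrChrd}. Such an automorphism carries chords to chords, since it preserves adjacency and hence non-adjacency. It also preserves the cyclic order of the boundary, possibly reversing it, so it preserves the non-crossing condition. Hence $\delta g$ is a set of $n-3$ non-crossing chords, and so a chordal decomposition of the same polygon $\Pi$, with the same CDS. Theorem~\ref{T:ChrdCord}(b) then says that each map $\lambda_g\colon\mathbf v\mapsto\ket{\mathbf v}_{\delta g}$ is a coordinate system in the sense of Definition~\ref{D:PolCoSys}, that is, an element of $K_\Pi$. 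Its values lie in $I$, because chordal coordinates are areal coordinates in a triangle containing the point.

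Next we observe that \eqref{E:CartoCDS} is the barycenter, with uniform weights $1/2n$, of the finite family $(\lambda_g)_{g\in D_n}$ in $(I^\Pi)^V\cong I^{\Pi\times V}$. Theorem~\ref{T:PolCoSys} shows that $K_\Pi$ is closed under the binary operations $\underline p$. By the discussion of derived operations in \S\ref{SSS:PrDiWeAv}, every finite convex combination with positive weights can be written as an iterated composite of binary operations $\underline q_1\dots\underline q_r$. So we express the uniform average as such a composite, for instance by induction on the number $m$ of terms:
\[
\tfrac1m\textstyle\sum_{k=1}^m\lambda_k=\Big(\tfrac1{m-1}\sum_{k=1}^{m-1}\lambda_k\Big)\lambda_m\,\underline{(1/m)}.
\]
Induction then gives $\kappa\in K_\Pi$. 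Repeated terms in the sum, which occur when distinct $g$ give the same $\delta g$, cause no difficulty, because the average runs over the indexing set $D_n$ rather than over the orbit.

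Alternatively, and more directly, we can verify \eqref{E:BraktPid} for $\kappa$ by linearity, working in an ambient vector space. For $\mathbf a\in\Pi$,
\[
\sum_{\mathbf v}\braket{\mathbf a|\mathbf v}_\kappa\bra{\mathbf v}=\tfrac1{2n}\sum_{g}\sum_{\mathbf v}\braket{\mathbf a|\mathbf v}_{\delta g}\bra{\mathbf v}=\tfrac1{2n}\sum_g\bra{\mathbf a}=\bra{\mathbf a}.
\]
The values of $\ket{\mathbf v}_\kappa$ lie in $I$, since $I$ is convex.

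The only step needing care is the first: confirming that the dihedral action really produces chordal decompositions of the same geometric polygon $\Pi$. We must be clear that $g$ permutes the combinatorial labels of the fixed vertices, rather than moving $\Pi$. Under this reading, non-crossing is a purely combinatorial condition on cyclically ordered endpoints, namely that the endpoints do not interleave. Dihedral symmetries preserve interleaving, so the condition is preserved, and convexity of $\Pi$ guarantees that the geometric chords then decompose $\Pi$ into $n-2$ triangles.
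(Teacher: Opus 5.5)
Your proposal is correct and follows the paper's proof. The paper likewise writes $\kappa=\frac1{2n}\sum_{g\in D_n}\delta g$ as a barycentric combination of the coordinate systems $\delta g$ given by Theorem~\ref{T:ChrdCord}, and concludes using the convexity result Theorem~\ref{T:PolCoSys}. Your additional checks are sound details that the paper leaves implicit: that each $\delta g$ is again a chordal decomposition, the explicit reduction of the uniform $2n$-fold average to iterated binary operations, that the values lie in $I$, and the direct linearity verification of \eqref{E:BraktPid}.
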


\begin{proof}
In the notation of \S\ref{SS:PolCoSys}, we have
$$
\kappa=\frac1{2n}\sum_{g\in D_n}\delta g\,,
$$
a barycentric combination. By Theorem~\ref{T:ChrdCord}, each $\delta g$ forms a coordinate system. Theorem~\ref{T:PolCoSys} then shows that $\kappa$ is a coordinate system.
\end{proof}

\begin{example}\label{X:CartoCDS}
Continuing Example~\ref{X:ChordFun}, recall that $1^2$ is the only CDS available for a quadrilateral. We obtain
$$
\left(\tfrac5{16},0,\tfrac3{16},0\right)
+\left(0,\tfrac5{24},\tfrac2{24},\tfrac5{24}\right)
=\tfrac1{48}(15,10,13,10)
\simeq(0.312,0.208,0.271,0.208)
$$
as the cartographic coordinates for the point $\mathbf a$.
\end{example}

\subsection{Cartographic coordinates on the hexagon}\label{SSS:MoSamCom}

The hexagon $\Pi$ that was discussed in \S\ref{SSS:SoSamCom} gives a more illuminating illustration of cartographic coordinates that that provided by Example~\ref{X:CartoCDS}. We consider the points $\mathbf a,\mathbf b,\mathbf c$.

To calculate the cartographic coordinates of the points $\mathbf a, \mathbf b$ and $\mathbf c$, first note that that the orbit of the $D_6$-action that contains the initial chordal decomposition $\delta$ consists of just two chordal decompositions: the original $\delta_1 = \delta$, and then its image $\delta_2$ given by $\mathds 1_{24} \circ \mathds 1_{26} \circ \mathds 1_{46}$. Applying \eqref{E:CartoCDS} to $\delta$ from \S\ref{SSS:SoSamCom}, we have
\begin{equation}\label{E:averaged}
|\mathbf v_i\rangle_{\kappa} = \frac1{12} (6 \cdot |\mathbf v_i\rangle_{\delta_1} + 6 \cdot |\mathbf v_i\rangle_{\delta_2}) = \frac1{2} (|\mathbf v_i\rangle_{\delta_1} + |\mathbf v_i\rangle_{\delta_2})
\end{equation}
for $i = 1,\dots ,6$. The chordal coordinates of $\mathbf a, \mathbf b$ and $\mathbf c$ with respect to $\delta_2$ are calculated as in the previous case of $\delta_ 1=\delta$ from \S\ref{SSS:SoSamCom}. Specifically, we obtain
\begin{align*}
&\mathbf a = (0,3/4,0,0,0,1/4),\\
&\mathbf b = (0,2/3,0,1/6,0,1/6),\\
&\mathbf c = (0,1/3,0,1/3,0,1/3)
\end{align*}
for $\delta_2$. Thus, according to \eqref{E:averaged}, the points $\mathbf a, \mathbf b$ and $\mathbf c$ are presented with
\begin{align*}
&\mathbf a = (1/4,1/2,1/8,0,0,1/8),\\
&\mathbf b = (1/4,2/6,1/4,1/12,0,1/12),\\
&\mathbf c = (1/6,1/6,1/6,1/6,1/6,1/6)
\end{align*}
as their cartographic coordinates. In particular, note the more symmetrical presentation of the barycenter $\mathbf c$ of $\Pi$ than that provided by the original chordal coordinates.

\end{document}